\newtheorem{theorem}{Theorem}
\newtheorem{corollary}[theorem]{Corollary}
\newtheorem{definition}[theorem]{Definition}
\newtheorem{lemma}[theorem]{Lemma}
\newtheorem{remark}[theorem]{Remark}
\newcommand{\E}{\mathbf{E}}
\newcommand{\Pp}{\mathbf{P}}
\newcommand{\R}{\mathbb{R}}
\newcommand{\Z}{\mathbb{Z}}
\newcommand{\eps}{\varepsilon}
\newcommand{\Fc}{\mathcal{F}}
\title{Quadratic covariation estimates in nonsmooth stochastic calculus}
\author{ Sergio Angel Almada Monter and Yuri Bakhtin }
\begin{document}

\maketitle

\begin{abstract}
Given a Brownian Motion $W$, in this paper we study the asymptotic behavior, as $\eps \to 0$, of the quadratic covariation between $f ( \eps W)$ and $W$ in the case in which $f$ is not smooth. Among the main features discovered is that the speed of the decay in the case $f \in C^\alpha$ is at least polynomial in $\eps$ and not exponential as expected. We use a recent representation as a backward- forward It\^o integral of $[ f ( \eps W), W]$  to prove an $\eps$-dependent approximation scheme which is of independent interest. We get the result by providing estimates to this approximation. The results are then adapted and applied to generalize the results of~\cite{MioNonl}, and~\cite{nhn} related to the Small Noise Exit from a Domian problem for the Saddle Case.

\keywords{Non-smooth It\^o's formula \and Quadratic Variation \and Large Deviation}

\end{abstract}

\section{Introduction}
%Quadratic variation and covariation is a central topic in stochastic analysis. Its study started with the classical change 
%of variable formula by It\^o~\cite{Ito},  and it has gained increasing attention due to recent generalizations of this formula. 
%In this paper, we provide estimates for the quadratic covariation between a Brownian Motion with small intensity and a (nonsmooth) function of itself.
%The key feature of the classical 

One of the central results of stochastic calculus is It\^o's  change of variables formula for twice 
differentiable transformations of semimartingales. 
It was realized recently that one also needs to study nonlinear maps 
that are not smooth enough to allow an application of the classical It\^o formula. Various approaches to less regular changes of variables have
 been introduced, see~\cite{Boleau},~\cite{BoleauYor},~\cite{EisenbaumIto},~\cite{FollmerQuad},~\cite{ItoC1}, and
references therein.
These studies show that the key feature of the It\^o formula, the quadratic covariation term, is well-defined under 
much weaker assumptions than those leading to the traditional formula. However, no nontrivial quantitative estimates of the arising quadratic covariation
processes have appeared in the literature, to the best of our knowledge.

One area where such estimates are naturally needed is small random perturbations of dynamical systems. Often,
in the course of a study of a stochastic system one has to make a simplifying change of coordinates, transforming the system locally to a simpler one. If the transformation map
is $C^2$, then one can apply the classical It\^o calculus and easily control the It\^o correction term. However, there are situations where
a natural change of variables is less regular than $C^2$, and in these cases there is no readily available tool that could be used to control 
the generalized It\^o correction. 

The goal of this paper is to close this gap and provide quantitative estimates on the generalized It\^o
correction term under nonclassical assumptions on the transformation. 

Let us now be more precise. Let $W$ be a standard $1$-dimensional Wiener process on a complete probability space $(\Omega,\Sigma,\Pp)$ and $\eps>0$ be a constant.
If $g: \R \to \R$ is $C^2$, then the classical It\^o formula is (see~\cite[Section II.7]{ProtterLibro})
\[
 g(\eps W(t))-g(0)=\eps \int_0^t g'(\eps W(s))dW(s)+\frac{\eps^2}{2}\int_0^t g''(\eps W(s))ds.
\]
Introducing $f=g'\in C^1$,  we can also rewrite the second term in the r.h.s.\ as quadratic covariation between $f(\eps W)$ and $\eps W$:
for $Q_\eps(t)= [f(\eps W), \eps W](t)$, we have
\begin{equation*}  \label{eqn: quad-C1}
Q_\eps(t)=\eps^2 \int_0^t f' (\eps W(s) )ds,\quad t\ge 0.
\end{equation*}
In particular, for any $T>0$,  $\eps^{-1} \sup_{ t \leq T} Q_\eps(t) \to 0$ in probability as $\eps \to 0$.  In this paper we show that this converges holds in the case in which $f$ is not differentiable.  

The motivation for this problem relies on small random perturbation of dynamical systems. Suppose that $b$ is a vector field with a critical point at $x^*$ and let $S$ denote the flow generated by $b$: \[
\frac{d}{dt}S^t x = b(S^t x), \quad S^0 x = x.
\]
It is well known (see Section 2.8 of \cite{Perko}) that there is a continuous change of variables $g$ so that locally around $g(x^*)$ the flow $g(S^tx)$ behaves like the linearized version of $S$. In the small random perturbation case, this combined with the traditional It\^o formula imply (see, e.g. \cite{nhn},~\cite{MioNonl}) that if $g$ is at least $C^2$, then the system 
\[
dX_\eps (t) = b(X_\eps (t) ) dt + \eps dW(t), \quad X_\eps (0) = x_0,
\]
could be analyzed by working with the linear system 
\[
d \tilde X_\eps (t) = \left( A \tilde X_\eps (t) + \frac{\eps^2}{2} \Phi_\eps (X^\eps (t) )  \right) dt + \eps \sigma (X_\eps (t) ) dW(t), \quad \tilde X_\eps (0) = g(x_0),
\]
where $x_0$ is close enough to $x^*$, $A$ is the Jacobian of $b$ at $x^*$, $\sigma$ is at least a continuous matrix valued function, and  $\eps^2 \Phi_\eps$ is the term corresponding to the quadratic covariation between $g^\prime (X_\eps)$ and $X_\eps$. There are well established cases for which $g$ is known to be $C^1$, see e.g. Hartman Theorem on Section 2.8 of~\cite{Perko}. In these cases, an already known $C^1$ formulation of It\^o's formula implies a similar analogy between the non-linear and linear systems. Hence, estimates that show that in these cases the quadratic covariation term decays faster than the It\^o term allow to reduce the local analysis to simpler exit problem for Ornstein-Uhlenbeck processes. 

 The analysis of the quadratic covariation $[g'(X),X]$ in connection
with extensions of It\^o's formula for functions $g\notin C^2$ is fundamental for nonsmooth 
It\^o calculus, see~\cite{EisenbaumLocal},~\cite{EisenbaumChapter},~\cite{FollmerQuad},~\cite{ItoC1},~\cite{ItoCov}.
In~\cite{FollmerQuad},~\cite{ItoC1},~\cite{ItoCov} methods from backward stochastic calculus were used (see also the summary~\cite{RussoCov}), 
while in ~\cite{EisenbaumLocal},~\cite{EisenbaumChapter} a local time approach was used.
 The basic result that has been explained
in the cited literature from several points of view is that for $T>0$, 
\begin{equation} \label{eqn: dif_intro}
  Q_\eps (t) = - \eps \int_0^t f(\eps W(s) )dW(s) - \eps \int_{T-t}^T f( \eps W(T-s) ) dW(T-s), 
\end{equation}
where both integrals can be understood as It\^o integrals w.r.t.\ appropriate filtrations. It is well known~\cite[page 389]{ProtterLibro} 
that the integral with respect to $W(T-\cdot)$ in~\eqref{eqn: dif_intro} is the time reversal 
of a semimartingale w.r.t.\ the natural filtration of $W(T-\cdot)$. Here the time reversal (with respect to $T>0$) 
of a process $X$ is understood as $X(T-t)-X(T)$. 

In this paper we exploit this structure by constructing an approximation scheme for $Q_\eps$
and using martingale techniques to show its consistency. As far as we know, this is the first attempt to use such a scheme in small noise analysis.
See~\cite{ValloisQuad} for a related but different scheme for local time approximation in the case $\eps=1$. 

The text is organized as follows. In Section~\ref{sec: Prelim} we state our main results that include the martingale representation for the quadratic covariation, and, in Section~\ref{sec: Saddle}, the results related to the application of non-smooth calculous to a particular small noise problem. In Section~\ref{sec: smal_quad} we use the martingale representation to propose an approximation scheme that we then use to 
prove the key bound that the main results depend upon. The proofs of the main theorems 
are given in Section~\ref{sec: proof_thm}. In Section~\ref{sec: add_proof} proofs of auxiliary lemmas are given.

\section{Main results} \label{sec: Prelim}
We are going to study $Q_\eps(t)= [f(\eps W), \eps W](t)$ assuming that $f:\R \to \R$  is a bounded and uniformly H\"older or Lipschitz function, although
these assumptions on $f$ can be relaxed.
It is convenient to formulate these assumptions in terms of modulus of continuity defined by:
\[
{ \rm osc}_f(\delta)=\sup_{ |t-s|< \delta} | f(s) - f(t) |,\quad \delta>0.
\]

Throughout the text, we work with an arbitrary fixed number $T>0$. We will not be explicit when including the
dependency on $T>0$ in the notation. We are ready to state the main results of the text.

\begin{theorem} \label{thm: Main} 
Suppose ${ \rm osc}_f(\delta) \le C_f \delta^\alpha$ for some $\alpha \in (0,1)$, $C_f >0$, and all sufficiently small $\delta$. Then,
 for every $\delta>0$, $\gamma \in (0,\alpha)$, and $\mu\in(\gamma,\alpha)$, there are constants $\eps_{\delta,\mu}>0$ and $C_{\delta,\mu}>0$  such that
\[
 \Pp \left\{ \eps^{-(1+\gamma)} \sup_{t \leq T} |Q_\eps (t)| > \delta  \right \} \leq C_{\delta,\mu} \eps^{ 2 (\alpha - \mu)/(1-\alpha) },\quad \eps \in (0, \eps_{\delta,\mu} ).\]
In particular, for any $\gamma\in (0,\alpha)$,
\[
 \eps^{-(1+\gamma)} \sup_{t \leq T} |Q_\eps (t)| \stackrel{\Pp}{\to} 0,\quad\eps\to 0. 
\]
\end{theorem}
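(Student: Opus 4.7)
The plan is based on the backward–forward Itô representation \eqref{eqn: dif_intro}, which writes $Q_\eps(t)$ as $-\eps$ times the sum of a forward Itô martingale $I_1(t)=\int_0^t f(\eps W)\,dW$ and a time-reversed Itô martingale $I_2(t)=\int_{T-t}^T f(\eps W(T-s))\,dW(T-s)$. A useful preliminary observation is that the constant $f(0)$ cancels between the two integrals (since $\int_0^t f(0)\,dW=f(0)W(t)$ while $\int_{T-t}^T f(0)\,dW(T-s)=-f(0)W(t)$), so $f(\eps W(\cdot))$ may be replaced by $f(\eps W(\cdot))-f(0)$ in both integrands. H\"older continuity then gives $|f(\eps W(s))-f(0)|\le C_f\eps^\alpha|W(s)|^\alpha$, and via It\^o's isometry and Doob's maximal inequality this already yields $\|\sup_{t\le T}|Q_\eps(t)|\|_{L^2}=O(\eps^{1+\alpha})$, which is enough for convergence in probability but only the crude rate $\eps^{2(\alpha-\gamma)}$ via Chebyshev.

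To recover the sharper rate $\eps^{2(\alpha-\mu)/(1-\alpha)}$ I introduce a scale $h=h(\eps)$ and the decomposition $f=f_h+(f-f_h)$, where $f_h$ is a smooth mollification of $f$ at bandwidth $h$ satisfying $\|f-f_h\|_\infty\le C h^\alpha$ and $\|f_h'\|_\infty\le C h^{\alpha-1}$. Correspondingly $Q_\eps=Q_\eps^{f_h}+Q_\eps^{f-f_h}$. For the smooth piece, the classical It\^o formula gives $Q_\eps^{f_h}(t)=\eps^2\int_0^t f_h'(\eps W)\,ds$, hence $|Q_\eps^{f_h}(t)|\le CT\eps^2 h^{\alpha-1}$ deterministically. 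For the residual, applying \eqref{eqn: dif_intro} with $f$ replaced by $f-f_h$ together with It\^o's isometry and Doob yields $\|\sup_{t\le T}|Q_\eps^{f-f_h}(t)|\|_{L^2}\le C\eps h^\alpha$.

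The final step is to balance the two pieces. Choose $h=c\eps^{(1-\mu)/(1-\alpha)}$ with $\mu\in(\gamma,\alpha)$ and $c=c(\delta)$ small, so that the smooth-part bound becomes $Cc^{\alpha-1}\eps^{1+\mu}\le\delta\eps^{1+\gamma}/2$ for all small $\eps$ (possible since $\mu>\gamma$). Markov's inequality applied to the residual then gives
\[
\Pp\bigl(\sup_{t\le T}|Q_\eps(t)|>\delta\eps^{1+\gamma}\bigr)\le\Pp\bigl(\sup_{t\le T}|Q_\eps^{f-f_h}(t)|>\tfrac{\delta\eps^{1+\gamma}}{2}\bigr)\le\frac{C\eps^2 h^{2\alpha}}{(\delta\eps^{1+\gamma}/2)^2},
\]
and substituting the choice of $h$ produces, after elementary arithmetic with the exponents, a bound of the stated form $C_{\delta,\mu}\eps^{2(\alpha-\mu)/(1-\alpha)}$.

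The principal technical obstacle is making Doob's maximal inequality rigorously apply to the backward It\^o integral $I_2(t)$ uniformly in $t\in[0,T]$: this requires viewing the time-reversed process as a martingale in an appropriately enlarged filtration containing $W(T)$, which is the content of Section~\ref{sec:enlargement_of_filtration}. Once the martingale structure of both integrals is in place, the remaining estimates are routine bookkeeping with H\"older regularity, and the probability bound can then be transferred to the supremum in $t$ without further loss.
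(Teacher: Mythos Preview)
Your mollification strategy is correct and genuinely different from the paper's argument. The paper never smooths $f$; instead it fixes a mesh $\delta_\eps$, approximates $L_\eps=\eps^{-1}Q_\eps$ by the Riemann-type sum $L_{\eps,P_\eps}(t)=\sum \Delta f(\eps W)\,\Delta W$, and controls both the approximation error and the sum itself via exponential martingale tail bounds combined with a L\'evy-modulus estimate on $W$. The balancing parameter there is the mesh $\delta_\eps=\eps^{2(\alpha-\mu)/(1-\alpha)}$, playing the role your bandwidth $h$ plays. Your route is more elementary: the smooth part is handled deterministically by classical It\^o, and the rough residual only needs $L^2$/Chebyshev rather than exponential inequalities. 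In fact your arithmetic, carried out carefully, gives the exponent $2(\alpha-\mu)/(1-\alpha)+2(\mu-\gamma)$, slightly better than the stated $2(\alpha-\mu)/(1-\alpha)$. What the paper's approach buys is extensibility: the exponential martingale bounds in Theorem~\ref{thm: MainII} are what produce the sub-exponential rate in the Lipschitz Theorem~\ref{thm: Main_Lip}, whereas your Chebyshev step would cap the rate at polynomial even when $\alpha=1$.

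One point to tighten: you describe the obstacle as ``viewing the time-reversed process as a martingale in an appropriately enlarged filtration.'' In the enlargement of Section~\ref{sec:enlargement_of_filtration}, $\hat W$ is only a \emph{semi}martingale, with drift $-\hat W(s)/(T-s)\,ds$; the backward integral therefore splits into a $\beta$-martingale plus the bounded-variation term $\int_0^t (f-f_h)(\eps W(s))\,W(s)/s\,ds$ (cf.\ Corollary~\ref{cor: L}). Doob applies to the $\beta$-part, and the drift is bounded by $\|f-f_h\|_\infty\int_0^T|W(s)|/s\,ds$, which has finite $L^2$ norm, so the overall $L^2$ bound $C\eps h^\alpha$ survives. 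This is routine, as you say, but it is a third term rather than part of the martingale structure.
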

This result is stronger than our initial claim that $\eps^{-1}Q_\eps\to 0$. Moreover, if $\alpha$ is close to $1$,  the exponent $1+\gamma$ can be chosen to be close to $2$. 

The method we employ to prove this theorem produces the following estimate in the Lipschitz case where $\alpha=1$: 

\begin{theorem} \label{thm: Main_Lip} 
Suppose ${ \rm osc}_f(\delta) \le C_f \delta$, for some constant $C_f >0$ and sufficiently small $\delta>0$. Then, for every $\delta>0$,
$\gamma \in (0,1)$, and $\mu\in(\gamma,1)$, there are constants $\eps_{\delta,\mu}>0$ and $C_{\delta,\mu}>0$  such that
\[
 \Pp \left\{ \eps^{-(1+\gamma)} \sup_{t \leq T} |Q_\eps (t)| > \delta  \right \} \leq C_{\delta,\mu}
e^{ - \eps^{ - (1 - \mu) } },\quad \eps\in (0,\eps_{\delta,\mu}).
\]
\end{theorem}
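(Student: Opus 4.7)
The plan is to mirror the proof of Theorem~\ref{thm: Main}, applying the backward-forward representation~\eqref{eqn: dif_intro} and the approximation scheme of Section~\ref{sec: smal_quad}, but upgrading the $L^p$-type moment estimates used in the H\"older regime to exponential concentration inequalities made available by the Lipschitz hypothesis.

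Concretely, I would first use~\eqref{eqn: dif_intro} to bound $\sup_{t \leq T}|Q_\eps(t)|$ by the suprema of the forward It\^o integral $\eps \int_0^t f(\eps W(s))\,dW(s)$ and its time-reversed counterpart. Then, on a uniform partition $\pi$ of $[0,T]$ with mesh $h = h_\eps$, split each of these integrals into a principal Riemann-type term plus a remainder given by a sum of small stochastic integrals over subintervals $[t_k,t_{k+1}]$. The Lipschitz bound ${\rm osc}_f(\eps \sup_{r \in [t_k,s]}|W(r)-W(t_k)|) \le C_f \eps \sup_r |W(r)-W(t_k)|$ makes each remainder integrand sub-Gaussian of magnitude $\eps\sqrt{h}$, so that the exponential inequality for continuous martingales together with the Gaussian tail for Brownian oscillation on a subinterval yields exponentially small deviation probabilities at the target scale $\eps^{1+\gamma}$.

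For the principal part, the forward and backward Riemann sums combine (after the time reversal from Section~\ref{sec:enlargement_of_filtration}) into a quantity controlled by $\eps^2 \sum_k (W(t_{k+1})-W(t_k))^2$, whose fluctuations around the mean $\eps^2 T$ form a sum of independent sub-exponential random variables to which Bernstein's inequality applies. Choosing $h = h_\eps$ as a suitable negative power of $\eps$ balances the two contributions; a union bound over $O(h_\eps^{-1})$ subintervals absorbs a polynomial factor and leaves the exponential rate in the announced form $C_{\delta,\mu}\,e^{-\eps^{-(1-\mu)}}$ for every admissible $\mu$.

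The main obstacle is the precise calibration of $h_\eps$: the Gaussian concentration of the remainders produces an exponent of order $-\eps^{-2}/h_\eps$, whereas the mean-square approximation bias shrinks only polynomially in $h_\eps$, so one must tune $h_\eps$ (roughly of the form $\eps^{1+\mu}$) fine enough to keep the bias below the threshold $\eps^{1+\gamma}$ yet coarse enough that the concentration exponent attains the target rate $-\eps^{-(1-\mu)}$, with the slack $\mu-\gamma$ accommodating the polynomial loss and the $h_\eps^{-1}$ union bound. Apart from this calibration, the argument is essentially a Lipschitz specialization of the key bound already derived in Section~\ref{sec: smal_quad}.
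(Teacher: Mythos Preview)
Your plan departs from the paper's proof in two essential ways, and carries a calibration error.

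First, the paper does \emph{not} upgrade any $L^p$ bounds to exponential ones: the general estimate of Theorem~\ref{thm: MainII} already delivers an exponential martingale term $K_1\gamma_\eps^{-1}\exp\bigl(-K_3\gamma_\eps^2/{\rm osc}_f(\eps q_\eps)^2\bigr)$ together with the additive $K_2\delta_\eps$. In the Lipschitz case ${\rm osc}_f(\eps q_\eps)\le C_f\eps q_\eps$, so the martingale term is automatically dominated once the mesh is fixed. The paper's entire argument is then to take the \emph{exponentially} fine mesh $\delta_\eps=e^{-\eps^{-(1-\mu)}}$, observe that $|\log\delta_\eps|\,\eps=\eps^{\mu}\to0$ so Theorem~\ref{thm: MainII} applies, and read off that the dominant contribution is $K_2\delta_\eps=K_2e^{-\eps^{-(1-\mu)}}$. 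Replacing $\delta$ by $\delta\eps^\gamma$ and extracting the leading term finishes the proof in a few lines. No Bernstein inequality, no new concentration bounds, and no polynomial mesh.

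Second, your polynomial calibration is internally inconsistent: with $h_\eps\approx\eps^{1+\mu}$ the quantity $-\eps^{-2}/h_\eps$ equals $-\eps^{-(3+\mu)}$, not the announced $-\eps^{-(1-\mu)}$; and that exponent in any case omits the factor $\eps^{2\gamma}$ coming from the true threshold $\delta\eps^\gamma$ for $L_\eps$. The ``mean-square approximation bias'' you describe as polynomial in $h_\eps$ has no obvious counterpart in the decomposition, since $|L_{\eps,P_\eps}|\le C_f\eps\sum_i(\Delta_iW)^2$ has mean $C_f\eps T$ independent of the mesh. You also omit the bounded-variation drift $A_\eps$ of~\eqref{eqn: A_def} arising from the Doob--Meyer decomposition of $\hat W$; in the paper this requires its own estimate (Lemma~\ref{lemma: A}) before the backward remainder can be treated by martingale methods. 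A Bernstein-based argument with polynomial mesh can in principle be pushed through, but it is heavier than you indicate, and the paper's exponential-mesh shortcut via Theorem~\ref{thm: MainII} bypasses all of it.
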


This theorem establishes that the rate of decay in probability is exponential in the Lipschitz case, which is coherent with the differentiable case in which almost sure convergence holds.  For the Holder case, the method only shows a polynomial upper bound which in principle does not imply that the convergence rate can not be exponential.

The proof of Theorems~\ref{thm: Main} and~\ref{thm: Main_Lip} will be given in Section~\ref{sec: proof_thm}. An important part of the analysis is
Theorem~\ref{thm: MainII} given in Section~\ref{sec: smal_quad} and in principle one can 
apply that result and its possible extensions to less regular functions $f$.  
The proof of Theorem~\ref{thm: MainII} is in turn
based on a forward-backward martingale representation of the quadratic covariation that we explain in Section~\ref{sec: FB}. For now, we proceed to explain an application of the above results to a small noise problem studied in~\cite{MioNonl},~\cite{Bakhtin-SPA},~\cite{nhn},~\cite{Day}, and~\cite{Kifer}.

\subsection{Applications to the Small Noise Problem.} 
\label{sec: Saddle}
In this section we consider the small noise scape from a saddle problem that, up to our knowledge, was first studied in~\cite{Kifer}. The objective of the section is to establish the role that Theorems~\ref{thm: Main}, and~\ref{thm: Main_Lip}  have in the study of this problem. Let us start with the statement of the problem. 

Consider a vector field $b:\R^d \to \R^d$, and a domain (open, bounded and convex set) $U \subset \R^d$ such that $0 \in U$ is the only critical point of $b$ in the closure of $U$. That is, $0 \in U$ is the only $x\in \bar{U}$ such that $b(x)=0$. Further, suppose that the vector field $b$ is such that its Jacobian at $0$, $A=Db(0)$ has at least one eigenvalue with positive real part, and one eigenvalue with negative real part. Under this conditions, consider the flow $S$ generated by $b$:
\[
\frac{d}{dt}S^tx = b( S^tx), \quad S^0x = x,
\]
and its small noise perturbation, 
\[
dX_\eps (t) = b( X_\eps )dt + \eps dW(t).
\]
The scape from a saddle problem is the study of the asymptotic behavior of the exit time 
\[
\tau_\eps(x) = \inf \left\{t>0: X_\eps(t) \in \partial U \right\}, \quad x \in U,
\]
and the exit location $X_\eps ( \tau_\eps (x) )$. The case of interest for this problem is when the initial condition for the diffusion $X_\eps(0)$ lies in the invariant stable manifold $$\mathcal{M}^s = \left \{  x: S^t x \to 0, \text{ as } t \to \infty \right \}.$$

The problem was first solved using a PDE approach in~\cite{Kifer}. In that paper, it is shown that the exit time is asymptotically logarithmic in $\eps$ and that the exit location is concentrated on the intersection of $\partial U$ and the invariant unstable manifold $$\partial U \cap \mathcal{M}^u = \partial U \cap \left \{  x: S^t x \to 0, \text{ as } t \to -\infty \right \}.$$
Later,~\cite{Day} refined the result of the exit distribution in two dimensions, and further refinements were made in higher dimensions in~\cite{Bakhtin-SPA}.

In~\cite{nhn} a further generalization to the exit location was obtained, using the idea mentioned in the introduction of this paper. This result was later iterated to get the first result for a heteroclinic network, which is a more general case than the simple saddle case. The argument in~\cite{nhn} is as follows. It is well known (see Section 2.8 of \cite{Perko}) that there is a continuous change of variables $h$ so that locally around $h(0)$ the flow $h(S^tx)$ behaves like the linearized version of $S$. For $X_\eps$ traditional It\^o formula imply (see, e.g. \cite{nhn},~\cite{MioNonl}) that if $h$ is at least $C^2$, then $\tilde X_\eps = h(X_\eps)$ satisfies
\[
d \tilde X_\eps (t) = \left( A \tilde X_\eps (t) + \frac{\eps^2}{2} \Phi_\eps ( \tilde X_\eps (t) )  \right) dt + \eps \sigma (X_\eps (t) ) dW(t), \quad \tilde X_\eps (0) = h(X_\eps(0) ),
\]
where $x_0$ is  to $0$, $\sigma$ is at least a continuous matrix valued function, and  $\eps^2 \Phi_\eps ( \tilde X_\eps (t) ) $ is the quadratic covariation term between the derivative of $h$ evaluated at $X_\eps$ and $X_\eps$ itself. Under the assumption that $h \in C^2$, the above converges to $0$ faster than the noise and hence it has no effect on the computation of the exit location. The limitation of this method is that the assumption $h \in C^2$ is quite restrictive. In~\cite{MioNonl} this restriction was studied by classifying systems that don't admit such a transformation $h$ in the $C^2$ class. The results of~\cite{nhn} were extended in~\cite{MioNonl} in the two dimensional setting: the change of coordinates $h$ transforms $X_\eps$ to a specific polynomial drift SDE in two dimensions which is then solved. In~\cite{MioNonl} it is also shown that this approach can not immediately be generalized to the high dimensional case.  As a consequence, in this paper, we attack the high dimensional case by following the approach proposed in~\cite{nhn} but by allowing the transformation $h$ to be not smooth. 

We focus on a particular case to keep the exposition manageable. The novelty relies in the assumption on the smoothness for the change of coordinates, which is the main focus of the paper. The proof is a rearrangement of the main facts covered in the body of the paper, and its presented in Section~\ref{sec: ProofThmSmallNoise}.  The theorem is stated in the spirit of Theorem 1 of~\cite{MioNonl}.

\begin{theorem} \label{thm: SmallNoise}
Suppose that $A$ has spectrum $\lambda_1,...,\lambda_d$ of real and simple eigenvalues such that $ \lambda_1 >  ...> \lambda_{\nu-1} > 0  > \lambda_\nu >... > \lambda_d$ for some integer $\nu \leq d$. Also, assume that $h:U \to \R^d$ is a differentiable function with differentiable inverse, such that all its partial derivatives satisfy the conditions of Theorem~\ref{thm: Main} with $\alpha > 1/2$ and that $h(S^tx) = e^{At}h(x)$ in $U= (-\Delta, \Delta)^d$. 

Denote $\partial U \cap \mathcal{M}^u = \{q_-, q_+\}$, and assume that $X_\eps(0) = x_0 \in \mathcal{M}^s \cap U$. Then, there is a family of random vectors $( \phi_\eps )_{\eps >0}$, a family of random variables $( \psi_\eps )_{ \eps > 0 }$, and a number  
\[
\beta = \left\{
	\begin{array}{ll}
		1,  & \mbox{if } \nu = 2 \text{ and } -\lambda_\nu \geq \lambda_1, \\
		-\frac{\lambda_v}{\lambda_1}, & \mbox{if } \nu = 2 \text{ and } -\lambda_\nu < \lambda_1, \\
		1 - \frac{\lambda_1}{\lambda_2}, & \mbox{if } \nu > 2 \text{ and } -\lambda_\nu \geq \lambda_1 - \lambda_2, \\
		-\frac{\lambda_\nu}{\lambda_1}, & \mbox{if } \nu > 2 \text{ and } -\lambda_\nu < \lambda_1 - \lambda_2, \\ 
	\end{array}
\right.
\]
such that $X_\eps( \tau_\eps ) = h( \Delta q_{ { \rm sgn } \psi_\eps} ) + \eps^\beta \phi_\eps$, and the random vector 
\[
\left( \psi_\eps, \phi_\eps, \tau_\eps - \frac{1}{\lambda_1} \log \eps \right)
\]
converges in distribution as $\eps \to 0$.
\end{theorem}

\begin{remark} The result is not a direct consequence of the results in this paper, since, as it will be clear at the beginning of Section~\ref{sec: ProofThmSmallNoise}, it requires a high dimensional version of the quadratic variation with drift. But we will see that to get this result the proof follows almost line by line  the proof of the main results of this paper. 
\end{remark}

\subsection{Forward-Backward Martingale Representation.} \label{sec: FB}
The proof of Theorem~\ref{thm: MainII} is based on a forward-backward martingale representation of the quadratic covariation. The focus of this section is to explain this representation as grounds to the full proof of Theorem~\ref{thm: MainII} to be given in the next section. In order to do so, we need some conventions on our notation that we state as definition:
\begin{definition}
The time reversal of a process  $X=( X(t) )_{ t\geq 0 }$  with respect to $T>0$ is defined by 
\[
\bar{X}(t)=X(T-t)-X(T), \quad t\in[0,T]. 
\]
Likewise, the backward of $X$ with respect to $T>0$ is defined by 
\[\hat X(t) = X(T-t), \quad t \in [0,T].\]
\end{definition}
%Since throughout the text, $T>0$ is an arbitrary number, the subscript on either the time reversal or backwards of a process will be omitted when the notation is clear.

%Let us give a brief review of the standard terminology in Martingale theory. Given a filtration $\mathcal{G}=( \mathcal{G}_t )_{ t\geq 0}$ satisfying the usual hypothesis, recall that a stochastic process $( X(t) )_{t\geq0}$ is a $\mathcal{G}$ martingale, if the following conditions hold:
%\begin{enumerate}
%\item $ \mathbf{E} X(t) < \infty$ for every $t \geq 0$.
%\item $X$ is $\mathcal{G}$ adapted; that is, $X(t)$ is $\mathcal{G}_t$ measurable for all $t \geq 0$.
%\item For every $0\leq s \leq t$, $ \mathbf{E} \left( X(t) | \mathcal{G}_s \right)= X(s), $ $\Pp-\rm{a.s.}$
%\end{enumerate}
%Likewise, a stochastic process $( X(t) )_{t\geq0}$ is a $\mathcal{G}$ semi martingale if it can be written as $X(t)=A(t)+M(t)$, where $( A(t) )_{t \geq 0}$ (the drift) is a right continuous with left limits $\mathcal{G}$ adapted process of locally bounded variation and $( M(t) )_{t \geq 0}$ is a $\mathcal{G}$ martingale. 

The starting point is the representation for $L_\eps=\eps^{-1} Q_\eps$ implied by~\eqref{eqn: quad-C1}%{eqn: dif_intro}
. For any $T>0$,
\begin{equation} \label{eqn: quad-int}
L_\eps (t) = - \int_0^t f( \eps W(s) ) dW(s) -\int_{T-t}^T f( \eps \hat{W} (s) ) d\hat{W}(s), \quad t\in[0,T].
\end{equation}
We will find a convenient way to rewrite this expression using an enlargement of filtration approach. Denoting the natural filtration of a process $X=(X_t)_{t\ge 0}$ by
$\Fc^X=(\Fc_t^X)_{t\ge 0}$, we note that that the integral with respect to $W$ in~\eqref{eqn: quad-int} is an
$\mathcal{F}^W$ martingale, 
while the integral with respect to $\hat W$ is the time reversal of the $\mathcal{F}^{ \hat W }$ semimartingale 
\[
N_\eps(t)= \int_0^t f( \eps \hat{W}(s) ) d \hat {W}(s).
\]
Therefore, one of the terms in~\eqref{eqn: quad-int} is a martingale, while the other one has a
nontrivial drift component. The following result reveals the structure of this time reversal.

%The idea to estimate~\eqref{eqn: quad-int} is to chose some filtrations, $\mathcal{H}$ and $\mathcal{G}$, such that $W$ and $\hat W_T$ are semi martingales with respect to $\mathcal{H}$ and $\mathcal{G}$ respectively. Additionally, we will chose this filtrations in such a way that when we use the Doob-Meyer decomposition~\cite[Section III.3]{ProtterLibro} of $W$ and $\hat W_T$ with respect 
%to $\mathcal{H}$ and $\mathcal{G}$ in~\eqref{eqn: quad-int} its drifts cancel each other out. Then we can apply martingale techniques to complete our estimate.
 
%The following is our main forward-backward martingale representation result.

\begin{theorem} \label{thm: Main_Mart}
Let $\mathcal{G}= ( \mathcal{G}_t )_{ t \in [0,T] }$ be the minimum filtration such that $W(T)$ is $\mathcal{G}_0$
measurable 
and $\mathcal{F}^{ \hat W}_t \subset \mathcal{G}_t$. Then , $\hat W$ is a $\mathcal{G}$ semimartingale with Doob--Meyer
decomposition given by
\begin{equation} \label{eqn: Doob_Meyer}
\hat{W}(t)=W(T)-\int_0^t \frac{ \hat W(s) } {T-s} ds + \beta(t),
\end{equation}
for some Brownian Motion $\beta$ with respect to $\mathcal{G}$. 

Moreover, if $\mathcal{H}=( \mathcal{H} )_{ t \in [0,T] }$ is the the minimum complete 
filtration such that $W(T)$ is $\mathcal{H}_0$ measurable and $\mathcal{F}^\beta_t \subset \mathcal{H}_t$, then $\beta$
is an $\mathcal{H}$ Brownian Motion, $\hat W$ is an $\mathcal{H}$ semimartingale with the Doob-Meyer
decomposition~\eqref{eqn: Doob_Meyer} and $\hat W$ can be written as 
\begin{equation} \label{eqn: What_sol}
\hat W (t) = W(T) ( 1 - t/T ) + ( T- t) \int_0^t \frac{ d \beta (s) }{ T-s }, \quad t \in [0,T].
\end{equation}
\end{theorem}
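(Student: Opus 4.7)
The plan is to define
\[
\beta(t) := \hat W(t) - W(T) + \int_0^t \frac{\hat W(s)}{T-s}\,ds
\]
and prove that $\beta$ is a $\mathcal{G}$ Brownian motion via L\'evy's characterization; the decomposition~\eqref{eqn: Doob_Meyer} is then just a rearrangement of this definition. Since $\hat W(0) = W(T)$, the variable $W(T)$ already lies in $\mathcal{F}^{\hat W}_0$, so $\mathcal{G}$ coincides with the (completed) natural filtration of $\hat W$; explicitly, $\mathcal{G}_s$ is generated by $\{W(r) : r \in [T-s, T]\}$.

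To verify the martingale property, I would compute $\E[\hat W(t) \mid \mathcal{G}_s]$ for $s < t$. By the Markov property of $W$, this reduces to $\E[W(T-t) \mid W(T-s)]$, and the conditional law of $W(T-t)$ given $W(0) = 0$ and $W(T-s)$ is that of a Brownian bridge with mean $W(T-s)(T-t)/(T-s)$. Hence $\E[\hat W(t) \mid \mathcal{G}_s] = \hat W(s)(T-t)/(T-s)$. Applying conditional Fubini to the drift integral yields
\[
\E\Big[\int_s^t \frac{\hat W(u)}{T-u}\,du \,\Big|\, \mathcal{G}_s\Big] = \hat W(s)\,\frac{t-s}{T-s},
\]
and summing these pieces gives $\E[\beta(t) \mid \mathcal{G}_s] = \beta(s)$.

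For the bracket, $\beta$ differs from $\hat W$ only by processes of bounded variation, so $\langle \beta \rangle_t = \langle \hat W \rangle_t = t$, the last equality because partition sums for $\hat W$ on $[0,t]$ coincide with those of $W$ on $[T-t, T]$. L\'evy's criterion then concludes that $\beta$ is a $\mathcal{G}$ Brownian motion. For the $\mathcal{H}$ part, minimality forces $\mathcal{H}_t \subset \mathcal{G}_t$, so $\beta$ is $\mathcal{H}$-adapted; a $\mathcal{G}$-martingale remains a martingale when conditioned on the smaller $\mathcal{H}_s$, and since the pathwise bracket is unchanged, $\beta$ is an $\mathcal{H}$ Brownian motion. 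To produce~\eqref{eqn: What_sol}, I would apply the integrating factor $(T-t)^{-1}$ to~\eqref{eqn: Doob_Meyer}: It\^o's product rule gives $d(\hat W(t)/(T-t)) = d\beta(t)/(T-t)$, and integrating then multiplying by $(T-t)$ recovers~\eqref{eqn: What_sol}; pathwise uniqueness for this linear SDE identifies $\hat W$ with the $\mathcal{H}$-adapted strong solution so obtained. The main obstacle is the Brownian-bridge conditional expectation: one must justify reducing the enlarged conditioning to $W(T-s)$ alone via the Markov property and then evaluate the bridge mean; the rest of the argument is a routine L\'evy check followed by an integrating-factor calculation.
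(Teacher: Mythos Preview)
Your argument is correct, but it differs from the paper's in two places, and the comparison is worth noting. For the $\mathcal{G}$-martingale property of $\beta$, the paper does not compute conditional expectations directly; instead it observes that $\bar W(t)=\hat W(t)-W(T)$ is a Brownian motion in its own filtration and then invokes a standard initial-enlargement result (Protter, Theorem~VI.3) to obtain the drift $\int_0^t \hat W(s)/(T-s)\,ds$ ready-made. Your Brownian-bridge computation of $\E[\hat W(t)\mid\mathcal{G}_s]$ is the elementary substitute for that citation and is perfectly valid. For the $\mathcal{H}$ part, the paper appeals to Jacod's criterion and verifies $\E[W(T)\beta(t)]=0$ to get independence of $\beta(t)$ from $W(T)$; you instead note $\mathcal{H}_t\subset\mathcal{G}_t$ and use the tower property, which is shorter and equally rigorous once one checks (as you implicitly do via~\eqref{eqn: What_sol}) that $\hat W$ is $\mathcal{H}$-adapted. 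The paper's route has the advantage of yielding the independence of $\beta$ and $W(T)$ as an explicit byproduct; yours is more self-contained and avoids external theorems. Both finish the explicit formula~\eqref{eqn: What_sol} the same way, via the integrating factor $(T-t)^{-1}$ and uniqueness for the linear SDE.
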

\begin{proof}
The result follows from Theorem~\cite[Theorem VI.3]{ProtterLibro}. 
\end{proof}
\begin{remark} \rm In particular, since $\hat W$ is $\mathcal{H}$ adapted, for every function $F:\R^2 \to \R^2$ such that 
\[
\E \int_0^T F(\hat W (s) )^2 ds< \infty,
\] 
the process $t \mapsto \int_{ T-t}^T F(\hat W(s) ) d\beta (s)$ is the time reversal of a martingale. 
\end{remark}

Using Theorem~\ref{thm: Main_Mart}, we can obtain a representation for $L_\eps=\eps^{-1} Q_\eps$. This is given in the following:
\begin{corollary} \label{cor: L}
Let $\mathcal{H}$ be as in Theorem~\eqref{thm: Main_Mart}. Then, the process $L_\eps= \eps^{-1} Q_\eps$ can be written as 
\begin{equation} \label{eqn: L_rep}
L_\eps(t)= -\int_0^t f(\eps W(s) )dW(s) - \int_{T-t}^T f( \eps \hat W(s) ) d\beta(s) + \int_0^t f( \eps W(s) ) \frac{W(s)}{s}ds,  
\end{equation}
which is the sum of a $\mathcal{F}^W$ martingale, a time reversal of an $\mathcal{H}$ martingale and a bounded variation
term.
\end{corollary}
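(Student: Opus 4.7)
The plan is to read off the corollary essentially by substituting the Doob--Meyer decomposition from Theorem~\ref{thm: Main_Mart} into the elementary identity~\eqref{eqn: quad-int} and performing a time-reversal change of variable in the drift.

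First, I would record that by Theorem~\ref{thm: Main_Mart}, $\hat W$ is an $\mathcal{H}$-semimartingale with $d\hat W(s) = -\frac{\hat W(s)}{T-s}\,ds + d\beta(s)$. Since $f$ is bounded and $\hat W$ is $\mathcal{H}$-adapted, the integrand $f(\eps\hat W(s))$ is a bounded $\mathcal{H}$-predictable process, so the semimartingale integral $\int_{T-t}^T f(\eps\hat W(s))\,d\hat W(s)$ splits unambiguously:
\[
\int_{T-t}^T f(\eps\hat W(s))\,d\hat W(s)
= \int_{T-t}^T f(\eps\hat W(s))\,d\beta(s)
- \int_{T-t}^T f(\eps\hat W(s))\,\frac{\hat W(s)}{T-s}\,ds.
\]

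Second, I would handle the drift integral by the substitution $u=T-s$, using $\hat W(s)=W(u)$ and $T-s=u$, to rewrite it as $\int_0^t f(\eps W(u))\,W(u)/u\,du$. Here a small point worth verifying is that this deterministic integral is well-defined: since $\E|W(u)|/u = \sqrt{2/(\pi u)}$ is integrable near $0$ and $f$ is bounded, the integrand is a.s.\ in $L^1([0,t])$. Plugging back into~\eqref{eqn: quad-int} gives exactly~\eqref{eqn: L_rep}.

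Third, I would identify the three terms on the right of~\eqref{eqn: L_rep} with the structure described in the statement. The first term is an $\mathcal{F}^W$-martingale by standard It\^o theory (boundedness of $f$). The second term is the time reversal of $t\mapsto \int_0^t f(\eps\hat W(s))\,d\beta(s)$, which is an $\mathcal{H}$-martingale since $\beta$ is an $\mathcal{H}$-Brownian motion (by the second part of Theorem~\ref{thm: Main_Mart}) and $f(\eps\hat W)$ is bounded and $\mathcal{H}$-adapted; the remark following Theorem~\ref{thm: Main_Mart} covers exactly this point. The third term is an absolutely continuous, hence bounded variation, process.

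There is no real obstacle here: the only mildly delicate step is justifying the integrability of $f(\eps W(u))W(u)/u$ near zero and keeping straight which filtrations make each stochastic integral a martingale, both of which are immediate from the hypotheses.
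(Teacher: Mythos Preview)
Your proposal is correct and follows exactly the same route as the paper: substitute the Doob--Meyer decomposition of Theorem~\ref{thm: Main_Mart} into~\eqref{eqn: quad-int}, change variables in the drift, and note that $W(t)/t$ is integrable on $[0,T]$ with $f$ bounded. The paper's own proof is a one-line reference to these ingredients; your write-up simply spells out the computation and the filtration bookkeeping in more detail.
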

\begin{proof}
This is an immediate consequence of Theorem~\ref{thm: Main_Mart} and~\eqref{eqn: quad-int} since $W(t)/t$ is integrable on the
interval $[0,T]$ and $f$ is bounded.
\end{proof} 
Theorem~\ref{thm: Main_Mart} is the main element we need to propose our approximation scheme, which is the main focus of Section~\ref{sec: smal_quad}. 

\section{Small Noise Analysis of Quadratic Covariation.} \label{sec: smal_quad}

In this section we study the quadratic covariation process $L_\eps = [f( \eps W),W]$. Recall the representation~\eqref{eqn: L_rep} given in Corollary~\ref{cor: L}. This will be one of the main ingredients in our proof.

Throughout this section, let $( n_{\epsilon } )_{\epsilon >0}$ be integers such that $n_\eps \nearrow \infty$ as $\eps \to 0$. Let us define $( \delta_{\epsilon } )_{\eps >0}$ by $\delta_\eps = T / n_\eps$, and observe that $\delta_\eps \searrow 0$ as  $\epsilon \to 0$.  The main result of this section is the following:
\begin{theorem} \label{thm: MainII} 
Let $q_\eps=2\sqrt{ \delta_\eps | \log \delta_\eps |}$, and let $(\gamma_\eps)_{ \eps >0 }$ satisfy $\gamma_\eps
\to 0$ and 
\begin{equation*}
 | \log \delta_\eps|\frac{ {\rm osc}_f ( \eps q_\eps ) }{ q_\eps \gamma_\eps } \to 0, \quad \eps \to 0.
\end{equation*}
Then, there are positive constants $K_1, K_2, K_3$ and $\eps_0$ such that
\begin{align*}
 \Pp \left\{  \eps^{-1} \sup_{t \leq T} |Q_\eps (t)| > \gamma_\eps  \right \} &\leq K_1 \gamma_\eps^{-1} e^ {- K_2 \gamma_\eps^2  /{\rm osc}_f ( \eps q_\eps)^2 } + K_3 \delta_\eps , \quad \eps \in (0,\eps_0 ).\\ 
\end{align*}
\end{theorem}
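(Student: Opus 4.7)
The plan is to approximate $L_\eps$ by a discrete Riemann-sum-like process $\tilde L_\eps$, bound $\tilde L_\eps$ pathwise on a high-probability good event where Brownian subinterval oscillations are controlled by $q_\eps$, and bound the approximation error $L_\eps - \tilde L_\eps$ by exponential martingale inequalities.

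First I reduce the three-term decomposition of Corollary~\ref{cor: L} to the equivalent two-term form
\[
L_\eps(t) = -\int_0^t f(\eps W(s))\, dW(s) - \int_{T-t}^T f(\eps \hat W(s))\, d\hat W(s),
\]
by combining $V_\eps$ with $\tilde M_\eps$: the substitution $u = T-s$ turns $\int_0^t f(\eps W(u))W(u)/u\,du$ into $\int_{T-t}^T f(\eps \hat W(s))\hat W(s)/(T-s)\,ds$, which through the Doob--Meyer identity $d\hat W = -\hat W/(T-s)\,ds + d\beta$ of Theorem~\ref{thm: Main_Mart} exactly cancels the bounded-variation component of $-\int_{T-t}^T f(\eps \hat W)\,d\beta$. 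Both resulting integrals are Ito integrals of bounded integrands against standard Brownian motions in their own natural filtrations. Setting $t_k = k\delta_\eps$ and letting $\pi(s)$ denote the largest $t_k \le s$, I define at grid points
\[
\tilde L_\eps(t_m) = \sum_{k=0}^{m-1} \bigl[f(\eps W(t_{k+1})) - f(\eps W(t_k))\bigr]\bigl[W(t_{k+1}) - W(t_k)\bigr];
\]
splitting each summand and re-indexing the $f(\eps W(t_{k+1}))\Delta W_k$ pieces via $\hat W(t_j) = W(t_{n_\eps - j})$ identifies $\tilde L_\eps(t_m) = M^{(n)}_\eps(t_m) + B^{(n)}_\eps(t_m)$ as the sum of the left-endpoint Riemann--Stieltjes discretizations of the two integrals above.

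On the good event $A_\eps = \{\max_k \sup_{s \in [t_k, t_{k+1}]}|W(s) - W(t_k)| \le q_\eps\}$, the reflection principle combined with $q_\eps^2/(2\delta_\eps) = 2|\log \delta_\eps|$ and a union bound over $n_\eps = T/\delta_\eps$ intervals yields $\Pp(A_\eps^c) \le K_2\delta_\eps$. Since each summand of $\tilde L_\eps$ is bounded on $A_\eps$ by $q_\eps\,{\rm osc}_f(\eps q_\eps)$, one gets the pathwise estimate $\sup_m |\tilde L_\eps(t_m)| \le 4T|\log \delta_\eps|\,{\rm osc}_f(\eps q_\eps)/q_\eps = o(\gamma_\eps)$ by the standing hypothesis. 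The error $L_\eps(t_m) - \tilde L_\eps(t_m)$ equals the sum of two Ito integrals, of $f(\eps W) - f(\eps W\circ\pi)$ against $dW$ and of $f(\eps \hat W) - f(\eps \hat W\circ\pi)$ against $d\hat W$; stopping $W$ at the first time $\tau_\eps$ any subinterval oscillation exceeds $q_\eps$ (so $\{\tau_\eps = T\} = A_\eps$) makes the stopped integrands deterministically bounded by ${\rm osc}_f(\eps q_\eps)$ and the stopped quadratic variations by $T\,{\rm osc}_f(\eps q_\eps)^2$. Bernstein's exponential maximal inequality for continuous martingales then produces, for each piece,
\[
\Pp\Bigl(\sup_{t \le T}|\text{stopped piece}(t)| > \gamma_\eps/4\Bigr) \le 2\exp\Bigl(-\frac{K_3\,\gamma_\eps^2}{{\rm osc}_f(\eps q_\eps)^2}\Bigr);
\]
a short $O(\sqrt{\delta_\eps})$ interpolation estimate inside each $[t_m, t_{m+1})$, via another Doob inequality for the martingale increments of $L_\eps$, transfers the bound from grid-time to continuous-time supremum. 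Combining $\Pp(A_\eps^c)\le K_2\delta_\eps$ with the pathwise $\tilde L_\eps$ bound and the two Bernstein estimates gives an upper bound at least as strong as the one claimed (in fact with no $\gamma_\eps^{-1}$ prefactor needed).

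The main obstacle is the absorption of the drift $V_\eps$ into the Ito integral against $\hat W$: a direct analysis of $V_\eps$ must confront the singularity of $W(s)/s$ at $s=0$ and admits only a Markov-based, polynomial-in-$\gamma_\eps^{-1}$ control with the wrong ${\rm osc}_f$-dependence, whereas the statement demands an exponential bound; after the drift is merged into the $d\hat W$ integral via the Doob--Meyer identity, the entire error becomes a sum of two bounded-integrand martingales and yields to Bernstein. A secondary technicality is the careful setup of $\tau_\eps$ so that the probabilistic oscillation control on $A_\eps$ becomes a deterministic integrand bound on $[0, \tau_\eps]$, which is what the Bernstein inequality requires.
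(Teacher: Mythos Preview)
Your discretization $\tilde L_\eps$, the good event $A_\eps$, and the pathwise bound on $\tilde L_\eps$ are exactly the paper's Lemma~\ref{lemma: cond_osc}, and the forward error piece $\int_0^t \bigl(f(\eps W)-f(\eps W\circ\pi)\bigr)\,dW$ is handled just as in the paper. The gap is in your treatment of the backward error.

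Your key claim is that ``after the drift is merged into the $d\hat W$ integral via the Doob--Meyer identity, the entire error becomes a sum of two bounded-integrand martingales and yields to Bernstein.'' This is false. The integrand $f(\eps\hat W(s))-f(\eps\hat W(\pi(s)))$ is adapted only to $\mathcal F^{\hat W}$, and in that filtration $\hat W$ is \emph{not} a martingale: Theorem~\ref{thm: Main_Mart} says precisely that $d\hat W(s)=d\beta(s)-\dfrac{\hat W(s)}{T-s}\,ds$. (Equivalently, $\bar W=\hat W-W(T)$ is a Brownian motion in $\mathcal F^{\bar W}$, but your integrand depends on $W(T)=\hat W(0)$ and is therefore not $\mathcal F^{\bar W}$-adapted.) Hence
\[
\int_{T-t}^{T}\bigl(f(\eps\hat W)-f(\eps\hat W\circ\pi)\bigr)\,d\hat W
=\int_{T-t}^{T}\bigl(f(\eps\hat W)-f(\eps\hat W\circ\pi)\bigr)\,d\beta
-A_\eps(t),
\]
with $A_\eps$ exactly the object~\eqref{eqn: A_def}. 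Bernstein applies to the $d\beta$ piece, but the drift $A_\eps$ has not gone anywhere; you have only moved it from the representation of $L_\eps$ into the representation of the error. Your stopping-time device does not help either: a localization that bounds the integrand does nothing to the bounded-variation part, and a $\tau_\eps$ defined through forward-time oscillations of $W$ is not even a stopping time in the backward filtration $\mathcal F^{\hat W}$.

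The paper closes this gap with Lemma~\ref{lemma: A}: on the good event one has $|\Delta_{s,t_i}f(\eps\hat W)|\le{\rm osc}_f(\eps q_\eps)$, so
\[
\sup_{t\le T}|A_\eps(t)|\le 2\sqrt T\,{\rm osc}_f(\eps q_\eps)\,\sup_{s\le T}\frac{|W(s)|}{\sqrt s},
\]
and the tail of $\sup_{s\le T}|W(s)|/\sqrt s$ is controlled by a Gaussian concentration bound (Dudley entropy plus Borell--TIS). This yields the same exponential rate $\exp\{-K\gamma_\eps^2/{\rm osc}_f(\eps q_\eps)^2\}$ as the martingale pieces, which is why the theorem has the form it does. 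Your proposal is missing exactly this estimate; without it the backward error is not under control.
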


The idea of the proof is to start with the representation~\eqref{eqn: dif_intro} and use Theorem~\ref{thm: Main_Mart}
to prove an approximation to each integral by a sum of increments. The result will follow once we combine the
approximating sum for each integral into one. We will devote the rest of this section to developing this idea.

\subsection{Approximating Processes} \label{sec: Approx}

Let $P_{\epsilon }$ be the partition of the interval $[0,T]$ given
by points $0=s_{0}<...<s_{n_{\epsilon }}=T$, where $s_i = i \delta_{\epsilon }$, for $i=0,...,n_{\epsilon }$. Also, define the backward
partition $\hat{P}_{\epsilon }$ to be the partition of $[0,T]$
given by points $0= t_{0}<...<t_{n_{\epsilon }}=T$, where $t_{i}=T-s_{n_{\epsilon }-i}$. 

For an arbitrary process $Y$ and times $ s,t \in [0,T]$ let $\Delta _{t,s}Y=Y(t)-Y(s)$. Then, for $t \in [0,T]$ we
introduce the following notation:

\begin{align}
S_{\epsilon }(t) &=\int_{0}^{t}f( \eps W (s) )dW (s),
\label{eqn: S_def} \\
\hat{S}_{\epsilon }(t) &=\int_{T-t}^{T}f( \eps \hat W (s) )d \hat{W}(s), \label{eqn: S_hat_def} \\
J_{\epsilon }(t) &=\sum_{i=1}^{i(t)} f( \eps W(s_{i-1}))\Delta_{s_{i},s_{i-1}} W, \label{eqn: J_def} \\
\hat{J}_{\epsilon }(t) &=\sum_{i=1}^{ i(t) }  f( \eps W(s_{i}))\Delta_{s_{i},s_{i-1}} W,  \label{eqn: J_hat_def}
\end{align} 
where $i(t)$ is given by 
\[
i(t)=\min \left \{ j \in [0,n_\eps] \cap \Z : s_j \geq t  \right \}.
\]
The idea is to approximate each element $S_\eps$ and $\hat S_\eps$ with $J_\eps$ and $\hat J_\eps$ respectively, so we can approximate $L_\eps$ by $L_{\eps,P_\eps}=\hat{J}_\eps -J_\eps$. Note  that since
\begin{equation*}
f( \eps W(s_{i}))\Delta _{s_{i},s_{i-1}} W =- f( \eps \hat{W}(t_{n_\eps-i}) ) \Delta _{t_{n_{\epsilon }-i+1},t_{n_{\epsilon }-i}} \hat{W},
\end{equation*}%
after reordering the sum in (\ref{eqn: J_hat_def}), we can rewrite $\hat{J}_\eps $ as
\begin{equation}
\hat{J}_{\epsilon }(t)=-\sum_{i=n_\eps - i(t) }^{n_\eps-1} f( \eps \hat{W}(t_i) ) \Delta _{t_{i+1},t_i } \hat W,  \label{eqn: J_hat_reversed}
\end{equation}
which is an integral sum of the It\^o integral $\hat{S}_\eps$.  We will use Theorem~\ref{thm: Main_Mart} to justify the application of martingale techniques to prove that $J_\eps$ approximates $S_\eps$ and that $\hat{J}_\eps$ approximates $\hat{S}_\eps$. 
 
Once we have an approximation of $L_\eps$ by $L_{\eps, P_\eps }$, we notice that 
\begin{align}
L_{\eps, P_\eps} (t) &= \sum_{i=1}^{ i(t) }\Delta 
_{s_{i},s_{i-1}} \left( f( \eps W) \right) \Delta
_{s_{i},s_{i-1}}W . \label{eqn: Q_eps_Peps_def}
\end{align}% 
The differences in $f$ in the above expression will be used to prove that $L_{\eps, P_\eps } (t)$ converges to $0$ uniformly in probability and get the result.  

We start with some preliminary results. The proofs will be postponed until Section~\ref{sec: add_proof} in order to keep the continuity of the paper. We state the next general lemma.
\begin{lemma} \label{Lemma: Martingale_general}
Let $\left( M_\eps \right)_{\epsilon > 0}$ be a family
of martingales such that for every $\epsilon>0$, $M_\epsilon (0)=0$, the quadratic variation 
$\langle M_\eps \rangle$ is absolutely continuous with respect to Lebesgue measure, and $ \left\langle
M_{\epsilon}\right\rangle (T) \leq r_{\epsilon }$. Then, for any $\delta >0$,
\begin{equation*}
\Pp\left\{ \sup_{t\leq T}|M_{\epsilon }(t)|>\delta \right\}
< \sqrt{ \frac{8 r_\eps}{ \pi \delta^{2} }}e^ { -\delta ^{2}/ ( 2 r_\eps )} .
\end{equation*}
\end{lemma}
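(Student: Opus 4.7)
The plan is to reduce the tail bound for $\sup_{t\leq T}|M_\eps(t)|$ to the corresponding tail bound for a standard Brownian motion via a time change, and then apply the reflection principle together with a Mill's ratio estimate for the Gaussian tail.

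First I would invoke the Dambis--Dubins--Schwarz representation: since $M_\eps$ is a continuous martingale (the assumed absolute continuity of $\langle M_\eps\rangle$ rules out jumps in the quadratic variation and hence in $M_\eps$), there exists a standard Brownian motion $B$, defined on a possibly enlarged probability space, such that $M_\eps(t)=B(\langle M_\eps\rangle(t))$ for all $t\in[0,T]$. Because $\langle M_\eps\rangle$ is nondecreasing and $\langle M_\eps\rangle(T)\leq r_\eps$, the image of $[0,T]$ under $\langle M_\eps\rangle$ is contained in $[0,r_\eps]$, so
\[
\sup_{t\leq T}|M_\eps(t)|\leq \sup_{u\leq r_\eps}|B(u)|.
\]

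Next I would apply the reflection principle to get $\Pp\{\sup_{u\leq r_\eps}B(u)>\delta\}=2\Pp\{B(r_\eps)>\delta\}$, together with the symmetric statement for $-B$, to obtain the two-sided bound
\[
\Pp\Bigl\{\sup_{u\leq r_\eps}|B(u)|>\delta\Bigr\}\leq 4\,\Pp\{B(r_\eps)>\delta\}=4\,\Pp\{Z>\delta/\sqrt{r_\eps}\},
\]
where $Z$ is standard normal. Finally, the strict Mill's ratio estimate $\Pp\{Z>x\}<(x\sqrt{2\pi})^{-1}e^{-x^2/2}$ applied with $x=\delta/\sqrt{r_\eps}$ yields exactly
\[
4\cdot\frac{\sqrt{r_\eps}}{\delta\sqrt{2\pi}}e^{-\delta^2/(2r_\eps)}=\sqrt{\frac{8\,r_\eps}{\pi\delta^2}}\,e^{-\delta^2/(2r_\eps)},
\]
which is the claimed upper bound.

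There is no real obstacle here; the only mild subtlety is that Dambis--Dubins--Schwarz in its most convenient form assumes $\langle M_\eps\rangle(\infty)=\infty$ so that the time change is defined on all of $[0,\infty)$. Under the present hypotheses one can either adjoin an independent Brownian increment to extend $M_\eps$ past time $T$ (without changing the supremum on $[0,T]$), or simply use the general continuous-martingale version of DDS which allows a finite terminal quadratic variation. Either workaround makes the reduction to Brownian motion rigorous, and from that point the computation is routine.
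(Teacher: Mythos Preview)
Your proof is correct and follows essentially the same route as the paper: Dambis--Dubins--Schwarz time change to reduce to $\sup_{u\le r_\eps}|B(u)|$, then reflection principle plus symmetry to get the factor $4\Pp\{B(r_\eps)>\delta\}$, then the standard Gaussian tail (Mill's ratio) bound. Your remark about handling a finite terminal quadratic variation in DDS is a useful clarification the paper leaves implicit.
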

We give a slight generalization of Levy's modulus of continuity lemma: 
\begin{lemma} \label{lemma: Levy}
For a Brownian motion $B$, define the modulus of continuity with respect to partition $P_\eps$ by
\begin{equation} \label{eqn: Levy}
\delta_{B,\eps} = \max_{i=1,...,n_\eps} \sup_{s \in [s_{i-1},s_i]} | \Delta_{s,s_{i-1}} B |.
\end{equation}
Then, there is a constant $C>0$ independent of $\eps>0$ such that for any $\delta>0$
\[
\Pp \left \{  \delta_{B,\eps} > \delta  \right \} \leq \frac{C} { \delta\sqrt{\delta_\eps}} e^{ - \delta^2 / ( 2 \delta_\eps ) }.
\]
In particular, there is a $K_2 >0$ such that
\[
\Pp \left \{  \delta_{B,\eps} > q_\eps  \right \} \leq K_2  \delta_\eps, \quad \eps >0.
\]
\end{lemma}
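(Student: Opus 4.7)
The plan is to combine a union bound over the $n_\eps$ partition intervals with a one-interval reflection-principle estimate for the running maximum of Brownian motion. The two factors $\delta^{-1}$ and $\delta_\eps^{-1/2}$ appearing in the bound suggest that the exponential tail comes from a Mills-ratio type Gaussian estimate (rather than the cruder $\Pp\{|B(t)|>a\}\le e^{-a^2/(2t)}$), so I would use the sharper form $\Pp\{B(t)>a\}\le \frac{\sqrt{t}}{a\sqrt{2\pi}}\,e^{-a^2/(2t)}$.

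First I would write
\[
\Pp\{\delta_{B,\eps}>\delta\}\;\le\;\sum_{i=1}^{n_\eps}\Pp\Bigl\{\sup_{s\in[s_{i-1},s_i]}|B(s)-B(s_{i-1})|>\delta\Bigr\},
\]
and then invoke stationarity of Brownian increments: each term on the right equals $\Pp\{\sup_{s\in[0,\delta_\eps]}|B(s)|>\delta\}$. Applying the reflection principle gives
\[
\Pp\Bigl\{\sup_{s\in[0,\delta_\eps]}|B(s)|>\delta\Bigr\}\;\le\;4\,\Pp\{B(\delta_\eps)>\delta\}\;\le\;\frac{4\sqrt{\delta_\eps}}{\delta\sqrt{2\pi}}\,e^{-\delta^2/(2\delta_\eps)}.
\]
Multiplying by $n_\eps=T/\delta_\eps$ and absorbing $4T/\sqrt{2\pi}$ into a single constant $C>0$ yields exactly the claimed bound
\[
\Pp\{\delta_{B,\eps}>\delta\}\;\le\;\frac{C}{\delta\sqrt{\delta_\eps}}\,e^{-\delta^2/(2\delta_\eps)}.
\]

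For the second conclusion I substitute $\delta=q_\eps=2\sqrt{\delta_\eps|\log\delta_\eps|}$ into this bound. The exponent becomes $\delta^2/(2\delta_\eps)=2|\log\delta_\eps|$, so for $\delta_\eps<1$ the exponential factor equals $\delta_\eps^2$. The prefactor $\delta\sqrt{\delta_\eps}$ equals $2\delta_\eps\sqrt{|\log\delta_\eps|}$, so the whole bound simplifies to $\frac{C}{2}\,\delta_\eps/\sqrt{|\log\delta_\eps|}$, which is dominated by a constant times $\delta_\eps$ once $\eps$ is small enough that $|\log\delta_\eps|\ge 1$. Taking $K_2$ to absorb this constant and the small-$\eps$ cutoff gives the claim.

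I do not expect any real obstacle here: the only subtlety is to use the Mills-ratio form of the Gaussian tail (so that the $\delta^{-1}\delta_\eps^{1/2}$ factor appears instead of just $1$) and to verify that the $\sqrt{|\log\delta_\eps|}$ in the denominator is harmless, since $|\log\delta_\eps|\to\infty$ as $\eps\to 0$ and so one may choose $K_2$ uniformly in $\eps\in(0,\eps_0)$ for any fixed $\eps_0$ small enough.
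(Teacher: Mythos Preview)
Your proposal is correct and follows essentially the same argument as the paper: union bound over the $n_\eps$ intervals, stationarity of increments, reflection principle plus the Mills-ratio Gaussian tail, then substitution of $q_\eps$ and simplification. The only cosmetic difference is that the paper writes $n_\eps\le 2T/\delta_\eps$ where you (correctly) use $n_\eps=T/\delta_\eps$, and the paper does not comment on the harmless $\sqrt{|\log\delta_\eps|}$ denominator, which you do.
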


With these two results at hand we are ready to estimate $L_{\eps, P_\eps }$
\begin{lemma}\label{lemma: cond_osc}

There is a positive constant $K$ such that for any $\delta>0$ and $\eps >0$,
\[
\Pp \left \{   \sup_{ t \in [0,T] } | L_{\eps,P_\eps}(t) | > \delta \right \} 
 \leq \Pp \left \{ | \log \delta_\eps|{\rm osc}_f ( \eps q_\eps ) > \frac{ q_\eps \delta}{4T}     \right \}+K \delta_\eps.
\]
\end{lemma}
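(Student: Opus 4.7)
The plan is to bound $L_{\eps,P_\eps}$ by a crude triangle inequality on each summand, using the modulus-of-continuity event $E=\{\delta_{W,\eps}\le q_\eps\}$ to simultaneously control both factors $\Delta_{s_i,s_{i-1}}(f(\eps W))$ and $\Delta_{s_i,s_{i-1}}W$. No martingale or cancellation argument is needed here; the finer estimates exploiting forward/backward structure are reserved for Theorem~\ref{thm: MainII} itself.

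First I would split $\{\sup_{t}|L_{\eps,P_\eps}(t)|>\delta\}$ according to whether $E$ occurs. Lemma~\ref{lemma: Levy} gives $\Pp(E^c)\le K_2\delta_\eps$, which accounts for the second term on the right-hand side. On $E$ every Brownian increment satisfies $|\Delta_{s_i,s_{i-1}}W|\le q_\eps$, hence $|\eps W(s_i)-\eps W(s_{i-1})|\le \eps q_\eps$ and therefore $|\Delta_{s_i,s_{i-1}}(f(\eps W))|\le {\rm osc}_f(\eps q_\eps)$ uniformly in $i=1,\dots,n_\eps$.

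Second I would sum absolute values. The key elementary identity is
\[
n_\eps q_\eps=\frac{T}{\delta_\eps}\cdot 2\sqrt{\delta_\eps|\log\delta_\eps|}=\frac{4T|\log\delta_\eps|}{q_\eps},
\]
using $q_\eps^2=4\delta_\eps|\log\delta_\eps|$. Combining this with the pointwise control from the previous paragraph gives, on $E$, for every $t\in[0,T]$,
\[
|L_{\eps,P_\eps}(t)|\le \sum_{i=1}^{n_\eps}|\Delta_{s_i,s_{i-1}}(f(\eps W))|\,|\Delta_{s_i,s_{i-1}}W|\le {\rm osc}_f(\eps q_\eps)\cdot n_\eps q_\eps=\frac{4T|\log\delta_\eps|\,{\rm osc}_f(\eps q_\eps)}{q_\eps}.
\]

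Finally I would conclude by a deterministic dichotomy. Whenever the (deterministic) inequality $|\log\delta_\eps|\,{\rm osc}_f(\eps q_\eps)\le q_\eps\delta/(4T)$ holds, the previous display is bounded by $\delta$, so $\{\sup_t|L_{\eps,P_\eps}(t)|>\delta\}\cap E=\emptyset$ and the claim reduces to $\Pp(E^c)\le K_2\delta_\eps$. In the complementary case the first probability on the right-hand side of the stated inequality equals $1$ and there is nothing to prove. Since the only genuinely probabilistic ingredient is the modulus-of-continuity bound already supplied by Lemma~\ref{lemma: Levy}, I do not expect any real obstacle; the sole subtlety is spotting the identity $n_\eps q_\eps=4T|\log\delta_\eps|/q_\eps$ that matches the constants appearing in the statement.
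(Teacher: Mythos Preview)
Your proposal is correct and follows essentially the same approach as the paper: both split on the event $\{\delta_{W,\eps}\le q_\eps\}$, bound the sum in~\eqref{eqn: Q_eps_Peps_def} termwise by ${\rm osc}_f(\eps q_\eps)\cdot n_\eps q_\eps$, invoke the identity $n_\eps q_\eps = 4T|\log\delta_\eps|/q_\eps$ (equivalently $q_\eps/\delta_\eps = 4|\log\delta_\eps|/q_\eps$), and handle the complementary event via Lemma~\ref{lemma: Levy}. The only cosmetic difference is that the paper first bounds by the random quantity $\delta_{W,\eps}$ and then splits, whereas you split first and bound directly by $q_\eps$; the content is identical.
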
 
Of course, the probability in the r.h.s.\ is either $0$ or $1$, and the estimate is meaningful only if the inequality
in the curly brackets is violated.
\begin{proof}
Let us start with the simple inequality
\begin{equation} \label{eqn: quad_sum_simple}
\sup_{ t \in [0,T] } L_{\eps,P_\eps} (t) \leq \sum_{i=1}^{n_\eps } \left| \Delta_{s_i,s_{i-1} } f( \eps W) \right | \left | \Delta_{s_i,s_{i-1} } W \right|,
\end{equation}
derived from~\eqref{eqn: Q_eps_Peps_def}. We estimate each term of the sum in the r.h.s.\ of~\eqref{eqn:
quad_sum_simple}. 

From definition~\eqref{eqn: Levy} it follows that
\begin{align*}
\max_{ i=1,..,n_\eps} |\Delta _{s_i,s_{i-1}} f(\eps W)| &\leq  { \rm osc}_f ( \eps \delta _{W,\eps} ). 
\end{align*}
Using this inequality and the definition of $n_\eps$ in~\eqref{eqn: quad_sum_simple}, we see that 
\begin{align*} \notag
\sup_{ t \in [0,T] } L_{\eps,P_\eps} (t) &\leq n_\eps { \rm osc}_f ( \eps \delta _{W,\eps} ) \delta_{W, \eps}  \\
&\leq T  \delta_{W, \eps} { \rm osc}_f ( \eps \delta _{W,\eps} ) / \delta_ \eps .
\end{align*}
Hence for every $\delta > 0$ the inequalities
\begin{align} \notag 
\Pp \left \{  \sup_{ t \in [0,T] } | L_{\eps,P_\eps} | >  \delta \right \} 
& \leq \Pp \left \{  {\rm osc}_f ( \eps \delta_{W,\eps} ) \delta_{W ,\eps} >\delta_\eps \delta/T ,\  \delta_{W,\eps} \leq q_\eps \right \} + \Pp \left \{  \delta_{W,\eps} > q_\eps \right \} \\ 
& \leq \Pp \left \{  {\rm osc}_f ( \eps q_\eps ) q_\eps > \delta_\eps  \delta/ T \right \}+ \Pp \left \{ \delta_{W,\eps} > q_\eps \right \} 
\label{eqn: Q_bound}
\end{align}
hold. The second term in the r.h.s. of~\eqref{eqn: Q_bound} can be bounded using Lemma~\ref{lemma: Levy}, so we focus on
the first term. For this notice that 
\[
 {\rm osc}_f ( \eps q_\eps ) q_\eps / \delta_\eps = 4 | \log \delta_\eps | {\rm osc}_f ( \eps q_\eps )/ q_\eps,
\]
which implies that
\begin{align} \notag
\Pp \left \{  {\rm osc}_f ( \eps q_\eps ) q_\eps > \delta_\eps  \delta/ T \right \} \leq \Pp \left \{ | \log \delta_\eps|{\rm osc}_f ( \eps q_\eps ) > q_\eps \delta/(4T)     \right \} .
\end{align}
The result follows after combining this fact with~\eqref{eqn: Q_bound} and Lemma~\ref{lemma: Levy}.
\end{proof}

\subsection{Approximation of $L_\eps$ by $L_{\eps,P_\eps}$} \label{sec: approx_proof}

We have shown that $L_{\eps,P_\eps}$ converges to $0$. In order to prove the convergence of $L_\eps$ we need to prove that $L_{\eps,P_\eps}$ approximates $L_\eps$. 

In order to do so define 
\begin{equation*}
M_{\epsilon }(t):=S_{\epsilon }(t)-J_{\epsilon }(t)+f(\eps W(s_{i(t)-1}))\Delta_{s_{i(t)},t} W,  %\label{eqn: M_eps_def}
\end{equation*}%
and 
\[
\hat{M}_{\epsilon }(t):=\hat{S}_{\epsilon }(t)+ \hat{J}_{\epsilon }(t)+f(\eps \hat{W}(t_{n_\eps - i(t)
}))\Delta_{T-t,n_\eps -i(t)} \hat W.
\]
Using~\eqref{eqn: S_def},~\eqref{eqn: J_def}, and  $i(t)$, we see that the process $M_\eps$ can be written as
\[
 M_{\epsilon } (t) =\sum_{i=0}^{ n_\eps }\int_{s_{i-1} \wedge t}^{s_{i} \wedge t}\Delta _{s,s_{i-1}} f( \eps W ) dW(s).
\]
Likewise, using~\eqref{eqn: S_hat_def},~\eqref{eqn: J_hat_def},~\eqref{eqn: J_hat_reversed} and the definition of the points $t_i$, we see that 
\begin{align} \notag
\hat{M}_\eps (t) &= \sum_{ i=0 }^{ n_\eps-1 } \int_{ t_i \vee (T-t) }^{ t_{i+1} \vee (T-t) }\Delta _{s,t_i} f( \eps \hat{W} ) d\hat W(s) \\  
&= \sum_{ i=0}^{ n_\eps-1} \int_{ t_i \vee (T-t) }^{ t_{i+1} \vee (T-t) }\Delta _{s,t_i} f( \eps \hat{W} ) d\beta(s) - A_\eps (t), \label{eqn: M_hat_sum}
\end{align}
where we defined 
\begin{equation}
A_\eps (t)=\sum_{ i=0 }^{n_\eps-1} \int_{ t_i \vee (T-t) }^{ t_{i+1} \vee (T-t)}\Delta _{s,t_i} f( \eps \hat{W} ) \frac{ \hat W (s) } { T-s } ds. \label{eqn: A_def}
\end{equation}
Notice that $M_\eps$ is a $\mathcal{F}^W$ martingale and $\hat M_\eps$ is the time reversal of a $\mathcal{F}^\beta$ semimartingale. This is the main fact in the proof of the following Lemma:

\begin{lemma} \label{lemma: Foward_Approx} There are positive constants $K_1,K_2, K_3$ and $\eps_0$ such that for any $\delta >0$,
\begin{align*}
\Pp \left\{ \sup_{t\leq T}| \tilde{M}_{\epsilon }(t)|> \delta \right\} &\leq ( K_1 / \delta ) e^{- K_3 \delta^2 / {\rm osc}_f \left( \eps q_\eps \right)^{2}} + K_2 \delta_\eps , \quad \epsilon \in (0,\epsilon _0).
\end{align*}%
Here $\tilde M_\eps$ can be either $M_\eps$ or $\hat{M}_\eps$.
\end{lemma}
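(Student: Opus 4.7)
The plan is to separate $\tilde M_\eps$ into a martingale piece (and, in the backward case, the singular drift $A_\eps$ from~\eqref{eqn: M_hat_sum}), and control each via localization plus Lemmas~\ref{Lemma: Martingale_general} and~\ref{lemma: Levy}. The good event $\{\delta_{W,\eps} \leq q_\eps\}$ (or its backward analogue) will be used to turn the random quadratic variation bound into a deterministic one so that Lemma~\ref{Lemma: Martingale_general} applies.

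For the forward case, the piecewise constant integrand shows that $M_\eps$ is an $\Fc^W$-martingale with
\[
\langle M_\eps \rangle(T) = \sum_{i=1}^{n_\eps} \int_{s_{i-1}}^{s_i} |\Delta_{s,s_{i-1}} f(\eps W)|^2 \, ds \leq T \cdot \mathrm{osc}_f(\eps \delta_{W,\eps})^2.
\]
I would apply Lemma~\ref{Lemma: Martingale_general} to the localized martingale $M_\eps^{\tau_\eps}$ with $\tau_\eps = \inf\{t: \langle M_\eps \rangle(t) > T\,\mathrm{osc}_f(\eps q_\eps)^2\}\wedge T$ and $r_\eps = T\,\mathrm{osc}_f(\eps q_\eps)^2$. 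Since $\tau_\eps = T$ on $\{\delta_{W,\eps} \leq q_\eps\}$, the forward-case bound follows from Lemma~\ref{Lemma: Martingale_general} on this event plus Lemma~\ref{lemma: Levy} on its complement.

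For the backward case, I would use~\eqref{eqn: M_hat_sum} to write $\hat M_\eps(t) = N_\eps(T) - N_\eps(T-t) - A_\eps(t)$, where $N_\eps(u) = \int_0^u H_\eps(s)\, d\beta(s)$ and $H_\eps(s) = \Delta_{s, t_i} f(\eps \hat W)$ on $[t_i, t_{i+1}]$. By Theorem~\ref{thm: Main_Mart}, $\hat W$ is $\mathcal{H}$-adapted, so $N_\eps$ is an $\mathcal{H}$-martingale with $\langle N_\eps \rangle(T) \leq T\,\mathrm{osc}_f(\eps \delta_{\hat W, \eps})^2$. The same localization argument, together with $\sup_t |N_\eps(T) - N_\eps(T-t)| \leq 2 \sup_u |N_\eps(u)|$, handles the $\beta$-integral part. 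For the drift, the change of variables $u = T-s$ gives
\[
\sup_{t \le T} |A_\eps(t)| \leq \mathrm{osc}_f(\eps \delta_{\hat W,\eps}) \int_0^T \frac{|W(u)|}{u}\, du,
\]
so on $\{\delta_{\hat W,\eps} \leq q_\eps\}$ it is bounded by $\mathrm{osc}_f(\eps q_\eps) R$ with $R = \int_0^T |W(u)|/u\, du$. To match the desired exponential rate in $\delta^2/\mathrm{osc}_f(\eps q_\eps)^2$, I would establish $\Pp(R > x) \leq C e^{-c x^2}$ via the logarithmic substitution $u = Te^{-s}$: this turns $R$ into $\sqrt{T} \int_0^\infty e^{-s/2} |X_s|\, ds$ for the stationary Gaussian process $X_s = W(Te^{-s})/\sqrt{Te^{-s}}$ with covariance $e^{-|s-s'|/2}$. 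Cauchy--Schwarz gives $R^2 \leq 2T \int_0^\infty e^{-s/2} X_s^2\, ds$, and Jensen against the probability measure $\tfrac12 e^{-s/2}\, ds$ yields $\E e^{\lambda R^2/T} < \infty$ for small $\lambda > 0$; Markov then gives the sub-Gaussian tail.

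The main obstacle is the drift piece. The two martingale terms succumb quickly to Lemma~\ref{Lemma: Martingale_general} once the correct stopping time is chosen, but $A_\eps$ carries the singular kernel $1/(T-s)$ with no martingale structure to exploit, so the exponential tail required to match the statement has to be extracted from the Gaussian structure of $W$ through the Ornstein--Uhlenbeck time change above.
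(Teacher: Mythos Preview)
Your proposal is correct, and for the martingale parts ($M_\eps$ and the $\beta$-integral $N_\eps$ in $\hat M_\eps$) it coincides with the paper's argument essentially line for line; your explicit use of the stopping time $\tau_\eps$ is in fact a cleaner way to justify the appeal to Lemma~\ref{Lemma: Martingale_general}, which the paper applies after conditioning on $\{\Gamma_\eps(T)\le T\,\mathrm{osc}_f(\eps q_\eps)^2\}$.

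The genuine difference is in how you handle the drift $A_\eps$. The paper (Lemma~\ref{lemma: A}) bounds
\[
\sup_{t\le T}|A_\eps(t)|\le 2\sqrt{T}\,\mathrm{osc}_f(\eps\delta_{W,\eps})\,\sup_{s\le T}\frac{|W(s)|}{\sqrt{s}},
\]
obtained from $\int_0^T (T-s)^{-1/2}\,ds=2\sqrt{T}$, and then invokes Dudley's entropy integral and Borell-type concentration (Lifshits) to get a Gaussian tail for $\sup_{s\le T}W(s)/\sqrt{s}$. You instead keep the full integral $R=\int_0^T |W(u)|/u\,du$, pass to the Ornstein--Uhlenbeck time scale $u=Te^{-s}$, and extract $\E e^{\lambda R^2/T}<\infty$ by Cauchy--Schwarz plus Jensen against the probability measure $\tfrac12 e^{-s/2}\,ds$. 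Both routes deliver the required rate $e^{-K\delta^2/\mathrm{osc}_f(\eps q_\eps)^2}$. Your argument is more elementary in that it avoids the metric-entropy machinery entirely, at the cost of the time-change trick; the paper's route is shorter once one is willing to cite the concentration result for suprema of Gaussian processes. One small point: make sure you note that $\delta_{\hat W,\eps}$ is controlled by (a constant times) $\delta_{W,\eps}$, or equivalently that Lemma~\ref{lemma: Levy} applies to $\bar W$, so that the event $\{\delta_{\hat W,\eps}\le q_\eps\}$ has the same $K_2\delta_\eps$ complement bound.
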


The following lemma will be used in the proof of Lemma~\ref{lemma: Foward_Approx}. The proof is postponed until
Section~\ref{sec: add_proof}.

\begin{lemma} \label{lemma: A}
There are positive constants $K_1, K_2, K_4$, and $\eps_0$ such that for all $\delta>0$,
\[
\Pp \left \{ \sup_{ t\in (0,T) } | A_\eps (t) | > \delta \right \} \leq ( K_1 /\delta) e^{ - K_4 \delta^2/  {\rm osc}_f ( \eps q_\eps )^2  } + K_2 \delta_\eps,  \quad \eps \in (0, \eps_0 ).
\]
\end{lemma}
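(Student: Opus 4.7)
The plan is to bound $\sup_{t} |A_\eps(t)|$ by its total variation, to exploit the modulus of continuity of $f$ on the good event $E_\eps = \{\delta_{\hat W, \eps} \leq q_\eps\}$ supplied by Lemma~\ref{lemma: Levy}, and then to dispose of the resulting scalar random variable via a sub-Gaussian tail estimate. Since $A_\eps$ is absolutely continuous in $t$ with $A_\eps(0) = 0$, the supremum is dominated by the total variation,
\[
\sup_{t \in (0,T)} |A_\eps(t)| \leq \sum_{i=0}^{n_\eps-1} \int_{t_i}^{t_{i+1}} |\Delta_{s,t_i} f(\eps \hat W)|\, \frac{|\hat W(s)|}{T-s}\, ds.
\]
On $E_\eps$ each difference $|\Delta_{s,t_i} f(\eps \hat W)|$ with $s \in [t_i, t_{i+1}]$ is at most $\mathrm{osc}_f(\eps q_\eps)$, and after the substitution $u = T-s$ we obtain
\[
\sup_{t \in (0,T)} |A_\eps(t)|\, \mathbf{1}_{E_\eps} \leq \mathrm{osc}_f(\eps q_\eps)\, X, \qquad X := \int_0^T \frac{|W(u)|}{u}\, du,
\]
while Lemma~\ref{lemma: Levy} applied to the Brownian motion $\hat W$ gives $\Pp(E_\eps^c) \leq K_2 \delta_\eps$.

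The heart of the argument is a sub-Gaussian tail for $X$. I would invoke Doob's transform: the process $Y(s) = e^{-s} W(e^{2s})$, $s \in \R$, is a stationary Ornstein--Uhlenbeck process with $Y(s) \sim N(0,1)$ for every $s$. The change of variables $u = e^{2s}$ gives $X = 2 \int_{-\infty}^{(\log T)/2} e^s |Y(s)|\, ds$. Cauchy--Schwarz bounds $X^2$ by $4T \int Y(s)^2\, p(ds)$, where $p(ds) = T^{-1/2} e^s\, ds$ is a probability measure on $(-\infty, (\log T)/2]$. Jensen's inequality against $p$, combined with the Gaussian moment identity $\E e^{\lambda Y(s)^2} = (1-2\lambda)^{-1/2}$, yields $\E e^{\lambda X^2/(4T)} \leq (1-2\lambda)^{-1/2}$ for $\lambda \in (0, 1/2)$. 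Chebyshev's inequality with $\lambda = 1/4$ then gives $\Pp\{X > y\} \leq \sqrt{2}\, e^{-y^2/(16T)}$. Setting $y = \delta/\mathrm{osc}_f(\eps q_\eps)$ and using the boundedness $\mathrm{osc}_f(\eps q_\eps) \leq 2\|f\|_\infty$ delivers the stated bound for any $K_4 < 1/(16T)$, with $K_1$ depending only on $T$, $K_4$ and $\|f\|_\infty$.

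The main obstacle is the sub-Gaussian tail of $X$: the kernel $1/(T-s)$ is not integrable on $[0,T]$, so a crude $L^\infty$ bound on $\hat W$ gives infinite expectation, and neither Chebyshev on the increments nor Burkholder--Davis--Gundy estimates produce Gaussian tails directly. The Ornstein--Uhlenbeck representation is essential because it turns the non-integrable singularity at $s = T$ into an exponentially decaying weight for a stationary Gaussian process, which is what yields the finite exponential moment of $X^2$ needed for the Gaussian tail.
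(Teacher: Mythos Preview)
Your argument is correct, and it differs from the paper's in the way you control the random factor that remains after pulling out $\mathrm{osc}_f(\eps q_\eps)$. The paper also starts with the total-variation bound, but instead of your integral $X=\int_0^T |W(u)|/u\,du$ it splits off a further factor $1/\sqrt{T-s}$ to obtain
\[
\sup_{t\in[0,T]}|A_\eps(t)|\le 2\sqrt{T}\,\mathrm{osc}_f(\eps \delta_{W,\eps})\,\sup_{s\le T}\frac{|W(s)|}{\sqrt{s}},
\]
and then proves sub-Gaussian tails for $N=\sup_{s\le T}W(s)/\sqrt{s}$ by the Dudley entropy integral and the Borell--TIS concentration inequality for suprema of Gaussian processes. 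Your route via the Ornstein--Uhlenbeck substitution $Y(s)=e^{-s}W(e^{2s})$ and the bound $\E e^{\lambda X^2/(4T)}\le (1-2\lambda)^{-1/2}$ is more elementary and self-contained: it avoids the entropy machinery entirely and in fact yields a cleaner estimate (no $1/\delta$ prefactor and explicit constants), which you then legitimately weaken to match the stated form using $\mathrm{osc}_f\le 2\|f\|_\infty$. The paper's approach, on the other hand, controls the pointwise supremum $|W(s)|/\sqrt{s}$ rather than its integral, which is a sharper quantity and may be of independent use, at the cost of invoking heavier Gaussian-process theory.
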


\begin{proof}[Proof of Lemma~\ref{lemma: Foward_Approx} ]
Let us start with the proof for $M_\eps$.  As we said before, the process $M_\eps$ is a martingale with quadratic variation $\Gamma _{\epsilon }=\left\langle M_{\epsilon
}\right\rangle $ given by 
\begin{equation}
\Gamma _{\epsilon }(t)= \sum_{i=1}^{n_\eps} \int_{s_{i-1} \wedge t}^{s_{i}\wedge t}|\Delta _{s,s_{i-1}} f(\eps W)|^{2}ds.  \label{eqn: Gamma_def}
\end{equation}
In order to apply Lemma~\ref{Lemma: Martingale_general}, we need to find a bound on the (random) function $\Gamma_\eps$.  In this case~\eqref{eqn: Levy} implies that 
\[
\sup_{ s \in [s_{i-1},s_i]} |\Delta _{s,s_{i-1}}f( \eps W)| \leq {\rm osc}_f ( \eps \delta_{W,\eps} ),  
\]
for all $\eps>0$. Using this bound in~\eqref{eqn: Gamma_def} we see that
\begin{equation} \label{eqn: difference_quadratic_2}
\Gamma_\eps (T) \leq T {\rm osc}_f ( \eps \delta_{W,\eps} )^2 .
\end{equation}
Lemma~\ref{Lemma: Martingale_general} implies that 
\begin{align} \notag
\Pp \left\{ \sup_{t\leq T } |M_{\epsilon }(t)|> \delta \right\} &\leq \Pp \left\{ \sup_{t\leq T}|M_{\epsilon }(t)|> \delta, \Gamma_\eps (T) \leq  T  {\rm osc}_f \left( \eps q_\eps \right)^2  \right\} \\ \notag
& \quad  + \Pp \left \{  \Gamma_\eps (T) >  T {\rm osc}_f \left( \eps q_\eps \right)^2  \right \} \\ 
& \leq \sqrt{8 T \frac{  {\rm osc}_f ( \eps q_\eps)^2 } { \pi  \delta^2 } } e^{ -  \delta^2 / (2T {\rm osc}_f \left( \eps q_\eps\right)^2 ) }  + \Pp \left \{  \Gamma_\eps (T) >  T {\rm osc}_f \left( \eps q_\eps \right)^2   \right \},  \label{eqn: ineq_exp_forward}
\end{align}
for all $\eps >0$ small enough. It remains to estimate the second probability in~\eqref{eqn: ineq_exp_forward}. 
Using~\eqref{eqn: difference_quadratic_2} it easily follows that for each $\eps >0$,
\begin{align*}
\Pp \left \{  \Gamma_\eps (T) > T {\rm osc}_f \left( \eps q_\eps \right)^2   \right \} 
& \leq \Pp \left \{  {\rm osc}_f \left( \eps \delta_{W,\eps} \right)  >  {\rm osc}_f \left( \eps q_\eps \right)   
\right \} \\
& \leq \Pp \left \{  \delta_{W,\eps} >   q_\eps     \right \}.
\end{align*}
Lemma~\ref{lemma: Levy} and~\eqref{eqn: ineq_exp_forward} imply the desired estimate for $M_\eps$. 

To obtain the estimate on $\hat M_\eps$, we notice that~\eqref{eqn: M_hat_sum} and~\eqref{eqn: A_def} imply
\begin{align*}
 \hat M_\eps (T-t) +A_\eps (T-t) &=  \sum_{i=0}^{n_\eps-1} \int_{ t_i \vee t }^{ t_{i+1} \vee t}\Delta_{s,t_i} f( \eps \hat W ) d \beta (s)\\
 &= \sum_{i=0}^{n_\eps-1} \int_{ t_i }^{ t_{i+1} }\Delta_{s,t_i} f( \eps \hat W ) d \beta (s)- \sum_{i=0}^{n_\eps-1} \int_{ t_i \wedge t }^{ t_{i+1} \wedge t}\Delta_{s,t_i} f( \eps \hat W ) d \beta (s).
 \end{align*}
 Then, it follows that
 \begin{align*}
 \sup_{ t \leq T }  \left| \hat M (t)- A_\eps(t) \right| &=  \sup_{ t \leq T }  \left| \hat M (T-t)- A_\eps(T-t) \right| \\
 & \leq \left| \sum_{i=0}^{n_\eps-1} \int_{ t_i }^{ t_{i+1} }\Delta_{s,t_i} f( \eps \hat W ) d \beta (s) \right| +\sup_{ t \leq T }  \left| \sum_{i=0}^{n_\eps-1} \int_{ t_i \wedge t }^{ t_{i+1} \wedge t}\Delta_{s,t_i} f( \eps \hat W ) d \beta (s) \right| \\
 &\leq 2 \sup_{ t \leq T }  \left| \sum_{i=0}^{n_\eps-1} \int_{ t_i \wedge t }^{ t_{i+1} \wedge t}\Delta_{s,t_i} f( \eps \hat W ) d \beta (s) \right| .
 \end{align*}
Using this bound to proceed in the same way as we did for $M_\eps$, we obtain that for any $\delta >0$,
\[
\Pp \left \{ \sup_{ t \leq T }  \left| \hat M (t)- A_\eps(T) \right| > \delta \right \} \leq ( K_1/ \delta ) e^{- K_2 \delta^2 /{\rm osc}_f \left( \eps q_\eps \right)^2 } + K_2 \delta_\eps,
\]
for all $\eps >0 $ small enough. Since
\[
\Pp \left \{ \sup_{ t \leq T }  \left| \hat M (t)\right| > \delta \right \} \leq
\Pp \left \{ \sup_{ t \leq T }  \left| \hat M (t)- A_\eps(t) \right| > \delta/2 \right \}
+ \Pp \left \{ \sup_{ t \leq T }  \left| A_\eps(t) \right| > \delta/2 \right \},
\]
the result follows from Lemma~\ref{lemma: A}.
\end{proof}

A consequence of Lemma~\ref{lemma: Foward_Approx} is the approximation of the quadratic covariation $L_{\epsilon }=[f(
\eps W), W]$  by $L_{\eps,P_\eps}$, given in the following Lemma:

\begin{lemma} \label{thm: Main_Quad_Brownian} If $( \gamma_\eps)_{ \eps > 0 } $ is such that $\gamma_\eps \to 0$ and ${
\rm osc }_f ( \eps q_\eps ) q_\eps \gamma_\eps^{-1} \to 0$ as $\eps \to 0$, then there are positive constants $K_1,
K_2, K_5,$ and $\eps_0$ such that  
\begin{align*}
 \Pp\left\{ \sup_{t\leq T} |L_{\epsilon }(t)-L_{\epsilon,P_{\epsilon }} (t)|> \gamma_\eps \right\} &
\leq K_1 \gamma_\eps^{-1} e^{- K_5 \gamma_\eps^2 / {\rm osc}_f \left( \eps q_\eps \right)^2 } + K_2 \delta_\eps, \quad \epsilon \in (0,\epsilon _0).
\end{align*}
\end{lemma}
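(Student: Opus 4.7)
The plan is to decompose $L_\eps - L_{\eps, P_\eps}$ in terms of the objects $M_\eps$ and $\hat M_\eps$ that Lemma~\ref{lemma: Foward_Approx} already controls, plus a small boundary remainder that the extra hypothesis on $\gamma_\eps$ is designed to kill, and then finish with a union bound.

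Starting from $L_\eps(t) = -S_\eps(t) - \hat S_\eps(t)$ (equation~\eqref{eqn: quad-int}) and the definition $L_{\eps, P_\eps}(t) = \hat J_\eps(t) - J_\eps(t)$ from~\eqref{eqn: Q_eps_Peps_def}, I first rewrite
\[
L_\eps(t) - L_{\eps, P_\eps}(t) = -(S_\eps(t) - J_\eps(t)) - (\hat S_\eps(t) + \hat J_\eps(t)).
\]
Solving the defining identities of $M_\eps$ and $\hat M_\eps$ for $S_\eps - J_\eps$ and $\hat S_\eps + \hat J_\eps$, this becomes $L_\eps(t) - L_{\eps, P_\eps}(t) = -M_\eps(t) - \hat M_\eps(t) + b_\eps(t)$, where
\[
b_\eps(t) = f(\eps W(s_{i(t)-1})) \, \Delta_{s_{i(t)}, t} W + f(\eps \hat W(t_{n_\eps - i(t)})) \, \Delta_{T-t, t_{n_\eps - i(t)}} \hat W.
\]

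The step that deserves the most care, and where I expect the main obstacle, is showing $b_\eps$ is small: at first glance each summand is bounded only by $\|f\|_\infty \delta_{W,\eps}$, which is not enough since the hypothesis is in terms of ${\rm osc}_f(\eps q_\eps) q_\eps$, not $q_\eps$ alone. The key observation is that the two boundary pieces telescope. Using $t_{n_\eps - i(t)} = T - s_{i(t)}$ one has $\hat W(t_{n_\eps - i(t)}) = W(s_{i(t)})$ and $\Delta_{T-t, t_{n_\eps - i(t)}} \hat W = W(t) - W(s_{i(t)}) = -\Delta_{s_{i(t)}, t} W$, so
\[
b_\eps(t) = \bigl[f(\eps W(s_{i(t)-1})) - f(\eps W(s_{i(t)}))\bigr] \, \Delta_{s_{i(t)}, t} W.
\]
On the event $\{\delta_{W, \eps} \leq q_\eps\}$ this forces $\sup_{t \leq T} |b_\eps(t)| \leq 2 \, {\rm osc}_f(\eps q_\eps) \, q_\eps$, which by the standing hypothesis is strictly less than $\gamma_\eps / 3$ for all sufficiently small $\eps$.

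A union bound then yields
\[
\Pp\bigl\{\sup_{t \leq T} |L_\eps(t) - L_{\eps, P_\eps}(t)| > \gamma_\eps\bigr\} \leq \Pp\bigl\{\sup_{t \leq T} |M_\eps(t)| > \gamma_\eps/3\bigr\} + \Pp\bigl\{\sup_{t \leq T} |\hat M_\eps(t)| > \gamma_\eps/3\bigr\} + \Pp\{\delta_{W, \eps} > q_\eps\}.
\]
The first two terms are controlled by Lemma~\ref{lemma: Foward_Approx} with $\delta = \gamma_\eps/3$, each contributing at most $(3K_1/\gamma_\eps) \exp(-K_3 \gamma_\eps^2 / (9\,{\rm osc}_f(\eps q_\eps)^2)) + K_2 \delta_\eps$, while the third is controlled by Lemma~\ref{lemma: Levy} and contributes at most $K_2 \delta_\eps$. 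Collecting the constants produces the stated inequality with $K_5 = K_3/9$, up to renaming of $K_1$ and $K_2$.
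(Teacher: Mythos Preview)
Your proof is correct and follows essentially the same route as the paper: decompose $L_\eps - L_{\eps,P_\eps}$ into $M_\eps$, $\hat M_\eps$, and a boundary remainder, bound the remainder by a constant times ${\rm osc}_f(\eps q_\eps)\,q_\eps$ on the event $\{\delta_{W,\eps}\le q_\eps\}$ so that the hypothesis kills it, and then invoke Lemma~\ref{lemma: Foward_Approx} and Lemma~\ref{lemma: Levy} via a union bound. Your telescoping computation for $b_\eps(t)$ is in fact a bit more explicit than the paper's inequality~\eqref{eqn: bnd_approx}, but the resulting estimate and constants are the same up to harmless numerical factors.
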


\begin{proof}
Let $(\gamma_\eps)_{ \eps >0} $ be as in the statement of the Lemma. By the definition of $M_\eps$ and $\hat{M}_\eps$, it follows that 
\begin{equation} \label{eqn: bnd_approx}
|L_{\epsilon }(t)-L_{\epsilon ,P_{\epsilon }}(t)|\leq |M_{\epsilon }|+|\hat{M%
}_{\epsilon }|+|\Delta _{s_{i(t)},s_{i(t)-1}}f( \eps W) \Delta _{s_{i(t)},s_{i(t)-1}}W |.
\end{equation}%
The result follows as a consequence of Lemmas~\ref{lemma: Levy} and~\ref{lemma: Foward_Approx}. Indeed, since 
\[
|\Delta _{s_{i(t)},s_{i(t)-1}}f( \eps W) \Delta _{s_{i(t)},s_{i(t)-1}}W | \leq {\rm osc }_f ( \eps \delta_{W,\eps} ) \delta_{W,\eps},
\]
Lemma~\ref{lemma: Levy} implies
\begin{align*}
\Pp \left \{  \right |\Delta _{s_{i(t)},s_{i(t)-1}}f( \eps W) \Delta _{s_{i(t)},s_{i(t)-1}}W | > \gamma_\eps \} &\leq \Pp \left \{ {\rm osc }_f ( \eps \delta_{W,\eps} ) \delta_{W,\eps} > \gamma_\eps, \delta_{W,\eps} \leq q_\eps \right \} \\
& \quad + \Pp \left \{  \delta_{W,\eps} > q_\eps  \right \} \\
& \leq \Pp \left \{ {\rm osc }_f ( \eps q_\eps ) q_\eps > \gamma_\eps \right \} + K_2 \delta_\eps.
\end{align*}
Hence, there is a $\eps_0>0$ such that
\[
\Pp \left \{  \right |\Delta _{s_{i(t)},s_{i(t)-1}}f( \eps W) \Delta _{s_{i(t)},s_{i(t)-1}}W | > \gamma_\eps \}  \leq K_2 \delta_\eps, \quad \eps \in (0, \eps_0).
\]
Using this bound and Lemma~\ref{lemma: Foward_Approx} in~\eqref{eqn: bnd_approx}, we obtain
\begin{align*}
\Pp\left\{ \sup_{t\leq T} |L_{\epsilon }(t)-L_{\epsilon,P_{\epsilon }} (t)|> \gamma_\eps \right\} &
\leq K_1\gamma_\eps^{-1} e^{- K_5 \gamma_\eps^2 / {\rm osc}_f \left( \eps q_\eps \right)^2 } + K_2  \delta_\eps,
\quad \eps \in (0, \eps_0 ).
\end{align*}
The proof is finished.
\end{proof}

\section{Proof of Theorems~\ref{thm: Main},~\ref{thm: Main_Lip} and~\ref{thm: MainII}}\label{sec: proof_thm}

\begin{proof}[Proof of Theorem~\ref{thm: MainII}]
The result is a consequence of Lemmas~\ref{lemma: cond_osc} and~\ref{thm: Main_Quad_Brownian}. Indeed, if $( \gamma_\eps)_{ \eps>0}$ is as in the statement of the Theorem, it is immediate to see that  
\begin{align} \notag
 \Pp \left\{  \eps^{-1} \sup_{t \leq T} |Q_\eps (t)| > \gamma_\eps  \right \} & \leq \Pp\left\{ \sup_{t\leq T}
|L_{\epsilon }(t)-L_{\epsilon,P_{\epsilon }} (t)| > \gamma_\eps/2 \right\} \\
  & \quad + \Pp \left \{   \sup_{ t \in [0,T] } | L_{\eps,P_\eps}(t) | > \gamma_\eps/2 \right \} \label{eqn: two_prob}.
\end{align}
The result will follow by applying Lemmas~\ref{lemma: cond_osc} and~\ref{thm: Main_Quad_Brownian} to the two terms in r.h.s. of~\eqref{eqn: two_prob}.

First, note that 
\[
\eta_\eps = | \log \delta_\eps | \frac{ {\rm osc}_f ( \eps q_\eps) }{ q_\eps \gamma_\eps } \to 0, \quad \eps \to 0,
\]
implies that ${\rm osc}_f ( \eps q_\eps)  q_\eps  \gamma_\eps^{-1} = 4 \eta_\eps \delta_\eps \to 0$, as $\eps \to 0$. Hence, from Lemma~\ref{thm: Main_Quad_Brownian}  we get that for some positive constants $K_1^\prime, K_2, K_5^\prime$ and $\eps_0^\prime$
\begin{align} \label{eqn: two_prob_approx}
\Pp\left\{ \sup_{t\leq T} |L_{\epsilon }(t)-L_{\epsilon,P_{\epsilon }} (t)|> \gamma_\eps/2 \right\} &
\leq K_1^{\prime} \gamma_\eps^{-1} e^{- K_5^\prime \gamma_\eps^2 / {\rm osc}_f \left( \eps q_\eps \right)^2 } + K_2 \delta_\eps,%\label{eqn: two_prob_approx}
\end{align}
for all $ \eps \in (0, \eps_0 )$ . Likewise, since $\eta_\eps \to 0$ as $\eps \to 0$, Lemma~\ref{lemma: cond_osc}
implies that for some positive constants $\eps_1$ and $K$, 
\begin{equation} \label{eqn: two_prob_conv}
\Pp \left \{   \sup_{ t \in [0,T] } | L_{\eps,P_\eps}(t) | > \gamma_\eps/2 \right \} \leq K   \delta_\eps, \quad \eps \in (0, \eps_1).
\end{equation}

The result follows by using~\eqref{eqn: two_prob_approx} and~\eqref{eqn: two_prob_conv} in~\eqref{eqn: two_prob}.
\end{proof}

\begin{proof}[Proof of Theorem~\ref{thm: Main}]
The proof is a consequence of Theorem~\ref{thm: MainII}. Indeed, let us find a family $(\delta_\eps)_{ \eps >0 }$ such that $\delta_\eps \to 0$ and
\[
 \lim_{\eps \to 0 }  |\log \delta_\eps | \frac{ {\rm osc}_f ( \eps q_\eps )}{q_\eps } =0.
\]
Let $A(\delta_\eps, \eps)=|\log \delta_\eps |  {\rm osc}_f ( \eps q_\eps )q_\eps^{-1}$. A straightforward calculation
gives 
\begin{align*}
A(\delta_\eps, \eps)&\leq  C_f \eps^\alpha \delta_\eps^{ ( \alpha -1 )/2 } | \log \delta_\eps |^{ ( \alpha + 1 )/2 }.
\end{align*}
Let $\mu \in (\gamma, \alpha)$ and take $\delta_\eps=\eps^{ 2( \alpha - \mu )/ ( 1 - \alpha ) }$. Then, $A(\eps^{ 2( \alpha - \mu )/ ( 1 - \alpha ) },\eps)\leq \hat{A}(\eps)$, where $\hat{A}(\eps)$  is given by
\[
\hat{A}(\eps)=C_{\alpha, f } \eps^{\mu} | \log \eps |^{ ( \alpha + 1 )/2 },
\]
for some constant $C_{\alpha, f}>0$ independent of $\eps>0$. So, we can use this $\delta_\eps$ in Theorem~\ref{thm: MainII} to get that
\begin{align} \notag
 \Pp \left\{  \eps^{-1} \sup_{t \leq T} |Q_\eps (t)| > \delta  \right \} &\leq K_1 \delta^{-1} \exp \left \{- C_0 \frac{ \left(\delta \eps^{-\alpha ( 1- \mu) / ( 1 - \alpha ) } \right)^2}
{ |\log \eps|^\alpha} \right \}  \\ 
& \quad + K_2  \eps^{ 2 (\alpha - \mu)/(1 - \alpha ) }, \label{eqn: ineq_Lip}
\end{align}
for all $\eps>0$ small enough and constants $K_1,K_2,C_0>0$ independent of $\eps>0$ and $\delta>0$.

Theorem~\ref{thm: MainII} actually implies that inequality~\eqref{eqn: ineq_Lip} remains true as long as 
$ \hat{A}(\eps)/\delta \to 0$, as $\eps \to 0$. So, since $\gamma \in (0,\mu)$, we can substitute $ \eps^\gamma \delta$ for $\delta$ in~\eqref{eqn: ineq_Lip} to get that 
\begin{align} \notag
 \Pp \left\{  \eps^{-( 1+\gamma) } \sup_{t \leq T} |Q_\eps (t)| > \delta  \right \} &\leq K_1 \delta^{-1} \eps^{-\gamma} \exp \left \{- C_0 \frac{ \delta^2 \eps^{2 \left(- (\alpha -\gamma) + \alpha ( \mu - \gamma) \right) / ( 1 - \alpha ) } }{  |\log \eps|^\alpha} \right \}  \\
 & \quad + K_2  \eps^{ 2(\alpha - \mu)/(1 - \alpha ) }.    \label{eqn: algebraHard}
 \end{align}
Since $\alpha \in (0,1) $ and $\mu < \alpha$, we have
 \begin{align*}
 \alpha ( \mu - \gamma) &< \alpha( \alpha -\gamma ) < \alpha - \gamma.
 \end{align*}
 Using this fact in~\eqref{eqn: algebraHard} we get that 
  \begin{align*}
 \Pp \left\{  \eps^{-( 1+\gamma) } \sup_{t \leq T} |Q_\eps (t)| > \delta  \right \} & \leq K_3 \eps^{ 2(\alpha - \mu)/(1 - \alpha ) }  ,
\end{align*}
for some $K_3>0$,  any $\delta>0$, and all $\eps >0$ small enough. The result is proved.
\end{proof}

\begin{proof}[ Proof of Theorem~\ref{thm: Main_Lip}]
 The proof follows the same steps as the proof of Theorem~\ref{thm: Main}. The first step is to to follow Theorem~\ref{thm: MainII} by finding a family $(\delta_\eps)_{ \eps >0 }$ such that $\delta_\eps \to 0$ and
\[
 \lim_{\eps \to 0 }  |\log \delta_\eps | \eps =0.
\]
Given $\gamma \in (0,1)$, we propose $\delta_\eps = e^{ - \eps^{ - (1 - \mu) } }$, for $\mu \in ( \gamma, 1)$. In this 
case, $|\log \delta_\eps | \eps=\eps^{\mu}$, so Theorem~\ref{thm: MainII} implies that
\begin{align*}
\Pp \left\{  \eps^{-1} \sup_{t \leq T} |Q_\eps (t)| > \delta  \right \} &\leq K_1 \delta^{-1} \exp \left \{- K_4 \delta^2 \eps^{ -( 1 + \mu) } e^{\eps^{- ( 1 - \mu )} } \right \}  \\ 
& \quad + K_2  e^{ - \eps^{ - (1 - \mu) } }.
\end{align*}
As in the proof of Theorem~\ref{thm: Main}, we can substitute $\delta \eps^\gamma$ instead of $\delta$ in the last
inequality. We can finish the proof by extracting the leading term in the resulting estimate.
\end{proof}

\section{Proof of Theorem~\ref{thm: SmallNoise}} \label{sec: ProofThmSmallNoise}
Using the results from~\cite{ItoC1} (see also ~\cite{Nualart}, and reference therein) we observe that $Y_\eps = h( X_\eps )$ satisfies 
\begin{align} \label{eqn: ItoC1}
dY_\eps =  \nabla h( X_\eps (t ) ) \cdot b ( X_\eps (t) ) dt + \eps \nabla h( X_\eps (t) ) dW(t) + \frac{1}{2} \mathcal{Q}_\eps(t),
\end{align}
with initial condition $Y_\eps(0) = h( X_\eps(0 ) )$, and where $\mathcal{Q} _\eps$ is an $\R^d$-valued process with j$^\text{th}$ coordinate $\mathcal{Q}^j_\eps$ given by
\begin{align}\notag 
\mathcal{Q} _\eps^j (t) &= \sum_{ k=1}^d \left[ \partial_k h^j( X_\eps ), X_\eps^j \right] \\ \label{eqn: Qjdef}
&=  \eps \sum_{ k=1}^d \left[ \partial_k h^j( X_\eps ), W^j \right], \quad j=1,...,d.
\end{align}
Differentiating with respect to $t$ the identity $h(S^tx) = e^{At}h(x)$, we get that $\nabla h( x ) b ( x ) = A h (x)$, which combined with~\eqref{eqn: ItoC1} implies
\begin{equation} \label{eqn: Yeqn}
dY_\eps(t) = AY_\eps(t) dt + \eps \left( \sigma( Y_\eps (t) ) dW(t) + \eps^{-1} \mathcal{Q}_\eps (t) \right ).
\end{equation}
From this expression, to conclude the proof it is enough to show that the term $\eps^{-1}\mathcal{Q}_\eps$ in the last display converges uniformly (in an appropriate time range) towards zero in probability. The proof of this fact extends the results of this paper, but it follows the same steps (with minor modifications) as the proof of~\cite{nhn}. 

We are now going to establish what kind of convergence we need from term $\mathcal{Q}_\eps$ in~\eqref{eqn: Yeqn} to finish the proof, and then state the result in a separate lemma. 
Using the results of~\cite{Bakhtin-SPA} that assert that $\frac{\tau_\eps}{-\log \eps}$ converges to a constant in probability, we can find for every $\upsilon>0$, there is a large enough constant $K_\upsilon>0$ such that 
\[
\Pp \left\{ \tau_\eps > -K_\upsilon \log\eps \right\} \leq \upsilon.
\]
Since $\upsilon$ is arbitrary,~\eqref{eqn: Qjdef} and~\eqref{eqn: Yeqn} imply that to finish the proof its enough to show that 
\[
\sup_{ t \in [0,-K_\upsilon \log \eps] } \left( \max_{j,k=1,...,d} \left[ \partial_j h^k ( X_\eps ), W^j \right] \right ) \to 0
\]
in probability as $\eps \to 0$. Lemma~\ref{lemma: quadh} implies this result and hence finishes the proof of this theorem.

\begin{lemma} \label{lemma: quadh}
Suppose $q:U \to \R$ is a function that satisfies the conditions of Theorem~\ref{thm: Main} with $\alpha > 1/2$. Then, for every $\Gamma>0$ and $\delta > 0$ it follows that 
\[
\lim_{ \eps \to 0} \Pp \left\{ \sup_{ t \in [0, - \Gamma \log \eps ] } [q(X_\eps), W^j](t) > \delta, \tau_\eps < -\Gamma \log \eps \right\}= 0,
\]
for every $j=1,...,d$.
\end{lemma}
\begin{proof}
The proof follow the exact same logic as the proof of Theorem~\ref{thm: Main} with slight modifications that we will point out. We keep the same notation as in Section~\ref{sec: Approx} when appropriate. For instance, $P_{\epsilon }$ is a partition of the interval $[0,-\Gamma \log \eps]$ given
by points $0=s_{0}<...<s_{n_{\epsilon }}=T_\eps = -\Gamma \log \eps$, where $s_i = i \delta_{\epsilon }$, for $i=0,...,n_{\epsilon }$. Also, define the backward
partition $\hat{P}_{\epsilon }$ to be the partition of $[0,T_\eps]$
given by points $0= t_{0}<...<t_{n_{\epsilon }}=T_\eps$, where $t_{i}=T_\eps-s_{n_{\epsilon }-i}$. 

Let us fix $j$ for the rest of the proof. Then, the idea is that the convergence towards $0$ of the process $\mathcal{q}_\eps(t) = [q(X_\eps), W^j]$ conditioned on the sigma algebra $\mathcal{A}^j_\eps$  generated by the history of $W$ up to time $T_\eps$ except for the j$^\text{th}$ component of $W$, is almost identical from the main result in Theorem~\ref{thm: Main}. We will show that this is the case, and then the proof will be finished due to the tower property of conditional expectations. 

As mentioned before, \cite{FollmerQuad},~\cite{ItoC1},~\cite{ItoCov} and~\cite{RussoCov} imply that upon fixing $\epsilon >0 $, conditioned on $\mathcal{A}^j_\eps$,
\[
\mathcal{q}_\eps (t) = - S_{\epsilon }(t) - \hat{S}_{\epsilon }(t) ,
\]
where (in analogy with the notation used in Section~\ref{sec: Approx}) we defined
\begin{align*}
S_{\epsilon }(t) =\int_{0}^{t}f( X_\eps (s) )dW^j(s), \text{ and } 
\hat{S}_{\epsilon }(t) =\int_{-\Gamma \log \eps-t}^{-\Gamma \log \eps}f( \hat X_\eps (s) )d \hat{W}^j(s).
\end{align*} 
Here the time reversal is taken with respect to time $T_\eps = -\Gamma \log \eps$.

As we did before, the proof now consists on approximating the above difference by its respective sums and then show that the approximating sequence converges to $0$. As expected to approximate the process $\mathcal{q}_\eps$ all steps will be the analogous to the ones followed in Section~\ref{sec: Approx}. In particular, $\mathcal{q}_\eps$ will be approximated by 
\begin{align}
L_{\eps, P_\eps} (t) &= \sum_{i=1}^{ i(t) }\Delta 
_{s_{i},s_{i-1}} \left( q(X_\eps) \right) \Delta
_{s_{i},s_{i-1}}W^j , \label{eqn: Q_eps_Peps_def}
\end{align}
where $i(t)$ is given by 
\[
i(t)=\min \left \{ j \in [0,n_\eps] \cap \Z : s_j \geq t  \right \}.
\]
To show that $L_{\eps,P_\eps}$ converges to $0$, we follow line by line the proof of Lemma~\ref{lemma: cond_osc}, with the only difference that $n_\eps$ is of order $-\delta_\eps^{-1} \log\eps$, and that the modulus of continuity of $X_\eps$ is now of the order $\max( \eps \delta_{\eps,W}, \delta_\eps )$. Proceeding as described, we obtain that there is a positive constant $K$ such that for any $\delta>0$ and $\eps >0$,
 \begin{equation}
 \Pp \left \{   \sup_{ t \in [0,- \Gamma \log \eps ] } | L_{\eps,P_\eps}(t) | > \delta \right \} 
 \leq \Pp \left \{ | \log \delta_\eps|{\rm osc}_q ( \max( \eps q_\eps, q_\eps^2 ) ) > \frac{ q_\eps \delta}{- 4\Gamma \log \eps}     \right \}+K \delta_\eps.
 \end{equation}
 By choosing $\delta_\eps = \eps^2$, it follows that $q_\eps$ is of the order $-\eps \log \eps$, and $\max( \eps q_\eps, q_\eps^2 )$ is of the order $-\eps^2 ( \log\eps )^2$. Hence, in this case, from the last display, to ensure that $L_{\eps,P_\eps}$ converges to $0$, we need that $\eps^{2\alpha - 1} \to 0$, as $\eps \to 0$. That is, we need $\alpha > 1/2$, as stated in the statement of the theorem.
   
 We are just left to show that the difference $L_\eps - L_{\eps, P_\eps}$ converges to $0$ under the additional conditions that $\delta_\eps$ is of order $\eps^2$. In this case, the method used in Section~\ref{sec: approx_proof} to proof Lemma~\ref{thm: Main_Quad_Brownian} follow line by line with the appropriate modifications related to the modulus of continuity of $X_\eps$, and the logarithmic grow of $\eps$. We leave the reader to fill the details.
\end{proof}

\section{Additional Proofs} \label{sec: add_proof}
\begin{proof}[Proof of Lemma~\ref{Lemma: Martingale_general}]
For each $\epsilon >0$, we use the representation of martingales as time changed Brownian Motion~\cite[Theorem
3.4.2]{Karatzas--Shreve}
to see that $M_{\epsilon }=B
(\left\langle M_{\epsilon }\right\rangle )$ in distribution in the space of
continuous functions, for some Brownian Motion $B$ (see \cite[Theorem
3.4.2]{Karatzas--Shreve}). Therefore,
\begin{equation*}
\Pp\left\{ \sup_{t\leq T }|M_{\epsilon }(t)|>\delta
\right\} \leq \Pp\left\{ \sup_{t\leq r_{\epsilon }}|B (t)|>\delta
\right\}.
\end{equation*}%
Now the symmetry of $B $, reflection principle~\cite[Section 2.6]{Karatzas--Shreve}, and Brownian
scaling (self-similarity) imply that
\begin{align*}
\Pp\left\{ \sup_{t\leq r_{\epsilon }}|B (t)|>\delta \right\}
&= \Pp \left \{ \sup_{t\leq r_{\epsilon }}\max \{ B (t) , - B (t) \} > \delta \right \} \\
&\leq 2\Pp\left\{ \sup_{t\leq r_{\epsilon }}B (t)>\delta \right\} \\
&\leq 4\Pp\left\{ B (r_{\epsilon })>\delta \right\} \\
&=4\Pp\left\{ \sqrt{r_{\epsilon }}B (1)>\delta \right\} .
\end{align*}%
The result follows by a standard Gaussian Tail estimate.
\end{proof}

\begin{proof}[Proof of Lemma~\ref{lemma: Levy}]
Fix $\delta>0$ and note that
\begin{equation} \label{eqn: Levy_sum}
\Pp \left \{ \delta_{B,\eps} > \delta \right \} \leq \sum_{i=1}^{n_\eps} \Pp \left \{ \sup_{ s \in (s_{i-1},s_i ) } | \Delta_{ s,s_{i-1} } B| > \delta  \right \}.
\end{equation}
We bound each of the probabilities in this sum. Since the process $\Delta_{ s,s_{i-1} } B$ is equal in distribution, on
the space of continuous functions, to a Brownian Motion itself up to a time shift, we can use reflection
principle~\cite[Theorem 2.9.25]{Karatzas--Shreve} and standard Gaussian bounds to get
\begin{align*}
\Pp \left \{ \sup_{ s \in (s_{i-1},s_i ) } | \Delta_{ s,s_{i-1} } B| > \delta  \right \} 
&\leq 4 \Pp \left \{ B ( \delta_\eps ) > \delta \right \} \\
&\leq \delta^{-1} \sqrt{ \frac{8 \delta_\eps} { \pi} } e^{ - \delta^2 / 2 \delta_\eps }. 
\end{align*}
Substituting this expression in~\eqref{eqn: Levy_sum} and using the fact that $n_\eps \leq 2 T/ \delta_\eps$, we see that there is a constant $C>0$ independent of $\eps>0$ such that for any $\delta>0$
\[
\Pp \left \{  \delta_{B,\eps} > \delta  \right \} \leq \frac{C} { \delta\sqrt{\delta_\eps}} e^{ - \delta^2 / ( 2 \delta_\eps ) }
\]
as expected.

To prove the second part, use $\delta= q_\eps=2\sqrt{ - \delta_\eps \log \delta_\eps }$ in the last expression to get that
\begin{align*}
\Pp \left \{  \delta_{B,\eps} >  q_\eps  \right \} &\leq  \frac{C} { 2 \delta_\eps  \sqrt{- \log \delta_\eps}}e^{ 2 \log \delta_\eps } \\
& = \frac{C \delta_\eps  } { 2  \sqrt{- \log \delta_\eps}} \\
& \leq K_2 \delta_\eps.
\end{align*}
Hence the result follows.
\end{proof}

\begin{proof}[Proof of Lemma~\ref{lemma: A}]
We start with a basic inequality
\begin{align*}
\sup_{ t \in [0,T] } |A_\eps (t)| & \leq \sum_{i=0}^{n_\eps - 1} \int_{s_i}^{s_{i+1}} | \Delta_{ s, s_i } f( \eps \hat W )| \frac{ | \hat W (s) | }{T-s}ds \\
& \leq 2 \sqrt{T} { \rm osc}_f ( \eps \delta_{W,\eps} ) \sup_{ s \leq T } \frac{ | \hat W(s) | }{ \sqrt{ T-s} } \\
& \leq 2 \sqrt{T} { \rm osc}_f ( \eps \delta_{W,\eps} ) \sup_{ s \leq T } \frac{ | W(s) | }{ \sqrt{s} }.
\end{align*}
It implies that
\begin{align*}
\Pp \left \{ \sup_{ t \in [0,T] } |A_\eps (t)| > \delta \right \} & \leq \Pp \left \{  \sup_{ s \leq T } \frac{ |W(s)| }{\sqrt{s}} > \frac{\delta} { 2{\rm osc}_f ( \eps q_\eps ) \sqrt{T} }\right \} + \Pp \left \{ \delta_{W,\eps} > q_\eps \right \} \\
& \leq \Pp \left \{  \sup_{ s \leq T } \frac{ |W(s)| }{\sqrt{s}} > \frac{\delta} { 2 {\rm osc}_f ( \eps q_\eps ) \sqrt{T} }\right \} + K_1 \delta_\eps,
\end{align*}
for some constant $K_1 > 0$ independent of $\delta>0 $ and $\eps >0$. To finish the proof, we need to study the tail probability of the random variable $A=\sup_{ s \leq T }|W(s)|/ \sqrt{s}$.

In order to study the tail decay of the random variable $A$, note that, due to the symmetry of Brownian Motion,
\[
\Pp \left\{ A > \delta \right \} \leq 2 \Pp \left\{ \sup_{ t \leq T} \frac{ W(t) }{ \sqrt{t} } > \delta \right \}.
\]
So it is sufficient to focus on the tail probabilities of the random variable $N=\sup_{t \leq T} ( W(t)/
\sqrt{t} ) $, which is the supremum of a Gaussian process.

Equip the interval $[0,T]$ with the metric $\rho$ given by 
\begin{align*}
\rho (s,t)^2 &= \mathbf{E} \left( \frac{W(s)}{\sqrt{s}} - \frac{W(t)}{\sqrt{t}} \right)^2 \\
& = 2 \left( 1- \sqrt{ \frac{s \wedge t} { s \vee t } } \right ), \quad s,t \in [0,T].
\end{align*}
We denote by $B_\theta (t) \subset [0,T]$ the $\rho$-ball of radius $\theta >0$ centered at $t  \in [0,T]$. 
Let
$H_\theta$ be the minimum number of balls of radius $\theta$ needed in order to cover $[0,T]$. According
to~\cite{Lifshits}[Section 14, Theorem 1], if
\begin{equation} \label{eqn: Dudley}
\int_0^{\sigma/2} \sqrt{ | \log H_\theta | } d\theta < \infty,
\end{equation}
with $\sigma = \sup_{ t \in [0,T] } {\rm var} ( W(t)/\sqrt{t} )=1$, then $\E N<\infty$.
Then, it is standard to see~\cite{Lifshits}[Corollary 2, Section 14] that there is a $\zeta_0> \E N$, such that for any $\zeta > \zeta_0$ 
\begin{equation} \label{eqn: concentration}
\Pp \left \{ |N-\E N| > \zeta \right \} \leq  C e^{ - \zeta^2 /2 } / \zeta,
\end{equation}
for some universal constant $C>0$. 

In our situation, if the integral in~\eqref{eqn: Dudley} is finite, this will be enough to finish the proof. Indeed, assuming~\eqref{eqn: concentration}, there is an $\eps_0 >0 $ such that
\begin{align*}
\Pp  &\left \{  \sup_{ s \leq T }  \frac{ |W(s)| }{\sqrt{s}} > \frac{\delta} { 2 {\rm osc}_f ( \eps q_\eps ) \sqrt{T} } \right \} 
 \leq 2 \Pp \left \{ N > \frac{\delta} {2 {\rm osc}_f ( \eps q_\eps ) \sqrt{T} }\right \} \\
& \hspace{1.5in}  \leq 2 \Pp \left \{ N - \E N > \frac{\delta} { 2 {\rm osc}_f ( \eps q_\eps ) \sqrt{T}} - \E N \right \} \\ 
& \hspace{1.5in}   \leq C_1\left ( \frac{ \delta }{ 2 \sqrt{T} {\rm osc}_f ( \eps q_\eps ) } - \E N \right )^{-1} \exp \left \{ - C_2 \left ( \frac{ \delta }{ 2 \sqrt{T} {\rm osc}_f ( \eps q_\eps ) } - \E N \right ) ^2 \right \} \\
& \hspace{1.5in}   \leq ( C_1 / \delta ) \exp \left \{ - C_3 \frac{ \delta ^2}{ {\rm osc}_f ( \eps q_\eps )^2} \right \},
\end{align*}
for some constants $C_1, C_2, C_3>0$ independent of $\eps$ and $\delta$, and all $\eps \in (0, \eps_0) $. Hence we just need to show that the integral~\eqref{eqn: Dudley} is finite.

We are going to give an estimate of $H_\theta$, $\theta \in (0,1/2)$. Suppose $0 \leq s < t \leq T$, then $s
\in B_\theta (t)$ if and only if 
\[
\sqrt{s} \geq \sqrt{t} ( 1 - \theta^2 / 2 ).
\] 
Therefore, if $s$ and $t$ belong to the same ball of radius $\theta \in (0,1/2)$, then
\[
| t- s| \leq T \theta^2 .
\]
Hence, $H_\theta \leq 2 / \theta^2$, and $ \sqrt{ | \log H_\theta | }$ is integrable on the interval $[0,1/2]$, which
implies our claim.
\end{proof}

\bibliographystyle{plain}
\bibliography{happydle}

\end{document}